\newcolumntype{L}{>{\displaystyle}l}
\newcolumntype{C}{>{\displaystyle}c}
\newcolumntype{R}{>{\displaystyle}r}
\newcommand{\R}{\ensuremath{\mathbb{R}}}
\newcommand{\N}{\ensuremath{\mathbb{N}}}
\newcommand{\CC}{\mathcal{C}}
\newcommand{\CO}{\ensuremath{\mathcal{O}}}
\newcommand{\ov}{\overline}
\newcommand{\T}{\theta}
\newcommand{\x}{\mathbf{x}}
\newcommand{\sgn}{\mathrm{sign}}
\newcommand{\de}{\delta}
\newcommand{\?}{\!\!\!\!\!\!\!\!\!\!\!\!\!\!}
\def\p{\partial}
\def\e{\varepsilon}
\newtheorem {theorem} {Theorem}%[section]
\newtheorem {proposition} [theorem]{Proposition}
\newtheorem {lemma}  [theorem]{Lemma}
\newtheorem {remark} [theorem]{Remark}
\newtheorem {mtheorem} {Theorem}
\begin{document}
\renewcommand{\arraystretch}{1.5}

\author[J. Llibre, D.D. Novaes and M.A. Teixeira]
{Jaume Llibre$^1$, Douglas D. Novaes$^2$  and Marco A.
Teixeira$^2$}

\address{$^1$ Departament de Matematiques,
Universitat Aut\`{o}noma de Barcelona, 08193 Bellaterra, Barcelona,
Catalonia, Spain} \email{jllibre@mat.uab.cat}

\address{$^2$ Departamento de Matematica, Universidade
Estadual de Campinas, Caixa Postal 6065, 13083--859, Campinas, SP,
Brazil} \email{ddnovaes@gmail.com, teixeira@ime.unicamp.br}

\let\thefootnote\relax\footnotetext{Corresponding author Douglas D. Novaes: Departamento de Matematica, Universidade
Estadual de Campinas, Caixa Postal 6065, 13083--859, Campinas, SP,
Brazil. Tel. +55 19 991939832, Fax. +55 19 35216094, email: ddnovaes@gmail.com}

%\title[Averaging methods in discontinuous differential systems]
%{Averaging methods for studying the periodic orbits of discontinuous
%differential systems}

\title[Averaging methods in discontinuous differential systems]
{On the birth of limit cycles for\\ non--smooth dynamical systems}

\subjclass[2010]{37G15, 37C80, 37C30}

\keywords{periodic solution, averaging theory,
discontinuous differential system}

\maketitle

\begin{abstract}
The main objective of this work is to develop, via Brower degree theory and regularization theory, a variation of the classical averaging method for detecting limit cycles of certain piecewise continuous dynamical systems. In fact, overall results are presented to ensure the existence of limit cycles of such systems. These results may represent new insights in averaging, in particular its relation with non smooth dynamical systems theory. An application is presented in careful detail.
\end{abstract}

\section{Introduction and statement of the main results}

The discontinuous differential systems, i.e. differential equations
with discontinuous right--hand sides, is a subject that has been
developed very fast these last years. It has become certainly one of
the common frontiers between Mathematics, Physics and Engineering. Thus certain phenomena in control systems \cite{Bar}, impact and friction mechanics \cite{Br}, nonlinear oscillations \cite{AVK,M}, economics \cite{H,I}, and biology \cite{Ba,Kr}, are the main sources of motivation of their study, see for more details Teixeira \cite{T}. A recent review appears in \cite{physDspecial}.

\smallskip

The knowledge of the existence or not of periodic solutions is very
important for understanding the dynamics of  differential systems. One of good tools for study the periodic solutions is the
averaging theory, see for instance the books of Sanders and Verhulst
\cite{SV} and Verhulst \cite{V}. We point out that the method of
averaging is a classical and matured tool that provides a useful
means to study the behaviour of nonlinear smooth  dynamical systems.
The method of averaging has a long history that starts with the
classical works of Lagrange and Laplace who provided an intuitive
justification of the process. The first formalization of this
procedure was given by Fatou in 1928 \cite{Fa}. Very important
practical and theoretical contributions in the averaging theory were
made by Krylov and Bogoliubov \cite{BK} in the 1930s and Bogoliubov
\cite{Bo} in 1945. The principle of averaging has been extended in
many directions for both finite- and infinite-dimensional
differentiable systems. The classical results for studying the
periodic orbits of differential systems need at least that those
systems be of class $\CC^2$. Recently Buica and Llibre \cite{BL}
extended the averaging theory for studying periodic orbits to
continuous differential systems using mainly the Brouwer degree theory.

\smallskip

The main objective of this paper is to extend the averaging theory
for studying periodic orbits to discontinuous differential systems
using again the Brouwer degree.

\smallskip

Let $D$ be an open subset of $\R^n$. We shall denote the points of
$\R\times D$ as $(t,x)$, and we shall call the variable $t$ as the
time. Let $h:\R\times D \rightarrow \R$ be a $\CC^1$ function having
the $0\in\R$ as a regular value, and let $\Sigma= h^{-1}(0)$. Given $p\in\Sigma$ we denote its connected component in $\Sigma$ by $\Sigma_p$.

\smallskip

Let $X,Y:\R\times D \rightarrow \R^{n}$ be two continuous vector
fields. Assume that the functions $h$, $X$ and $Y$ are $T$--periodic
in the variable $t$. Now we define a {\it discontinuous piecewise
differential} system
\begin{equation}\label{PKs1}
x'(t)=Z(t,x)=
\begin{cases}
X(t,x)\quad \mbox{if}\quad h(t,x)>0,\\
Y(t,x)\quad \mbox{if}\quad h(t,x)<0.
\end{cases}
\end{equation}
We concisely denote $Z= (X,Y)_h$.

\smallskip

Here we deal with a different formulation for the discontinuous
differential system \eqref{PKs1}. Let $\sgn(u)$ be the sign function
defined in $R\setminus \{0\}$ as
\[
\sgn(u)=
\begin{cases}
\,\,\, 1 \quad \,\, \textrm{if}\quad u>0,\\
-1\quad\textrm{if}\quad u<0.
\end{cases}
\]
Then the discontinuous differential system \eqref{PKs1} can be
written using the function $\sgn(u)$ as
\begin{equation}\label{PKs2}
x'(t)=Z(t,x)=F_1(t,x)+\sgn(h(t,x))F_2(t,x),
\end{equation}
where
\[
\begin{array}{ccc}
F_1(t,x)=\dfrac{1}{2}\left(X(t,x)+Y(t,x)\right)& \textrm{and} &
F_2(t,x)=\dfrac{1}{2}\left(X(t,x)-Y(t,x)\right).
\end{array}
\]

To work with the discontinuous differential system \eqref{PKs2} we
should introduce the regularization process, where the discontinuous
vector field $Z(t,x)$ is approximated by an one--parameter family of
continuous vector fields $Z_{\de}(t,x)$ such that $\lim_{\de\to
0}Z_{\de}= Z(t,x)$.

\smallskip

In \cite{ST} Sotomayor and Teixeira introduced a regularization for
the discontinuous vector fields in $\R^2$ having a line of
discontinuity and, using this technique, they proved generically
that its regularization provides the same extension of the orbits
through the line of discontinuity that the one given by the
Filippov's rules, see \cite{F}. Later on Llibre and Teixeira
\cite{LT2} studied the regularization of generic discontinuous
vector fields in $\R^3$ having a surface of discontinuity, and
proved that $\lim_{\de\to 0}Z_{\de}$ essentially agrees with
Filippov's convention in dimension three.  Finally, in \cite{T}
Teixeira generalized the regularization procedure to finite
dimensional discontinuous vector fields.

\smallskip

In \cite{LST} Llibre, da Silva and Teixeira studied singular
perturbations problems in dimension three which are approximations
of discontinuous vector fields proving that the regularization
process developed in \cite{LT2} produces a singular problem for
which the discontinuous set is a center manifold, moreover, they
proved that the definition of sliding vector field coincides with
the reduced problem of the corresponding singular problem for a
class of vector fields.

\smallskip

In general, a transition function is used in these regularizations to average the vector fields X and Y on the set of discontinuity in order to get a family of continuous vector fields that approximates the
discontinuous one.

\smallskip

A continuous function $\phi:\R\rightarrow\R$ is a {\it transition
function} if $\phi(u)=-1$ for $u\leq-1$, $\phi(u)=1$ for $x\geq 1$
and $\phi'(u)>0$ if $u\in(-1,1)$. The $\phi$--regularization of
$Z=(X,Y)_h$ is the one--parameter family of continuous functions
$Z_{\de}$ with $\de\in (0,1]$ given by
\[
Z_{\de}(t,x)=\dfrac{1}{2}\left(X(t,x)+Y(t,x)\right)+\dfrac{1}{2}
\phi_{\de}(h(t,x))\left(X(t,x)-Y(t,x)\right),
\]
with
\begin{equation}\label{PKf1}
\phi_{\de}(u)=\phi\left(\dfrac{u}{\de}\right).
\end{equation}
Note that for all $(t,x)\in (\R \times D)\backslash \Sigma$ we have
that $\lim_{\de\to 0}Z_{\de}(t,x)=Z(t,x)$.

\smallskip

The formulation \eqref{PKs2} of the discontinuous differential
system \eqref{PKs1} admits a natural regularization. Define the
transition function $\phi$ as
\begin{equation}\label{e1}
\phi(u)=
\begin{cases}
\begin{array}{cl}
1 & \textrm{if}\quad u\geq 1,\\
u & \textrm{if}\quad -1<u<1,\\
-1 &\textrm{if}\quad u\leq-1.
\end{array}
\end{cases}
\end{equation}
Let $\phi_{\de}:\R\rightarrow\R$ be the continuous function defined
in \eqref{PKf1}. It is clear that
\begin{equation}\label{e2}
\lim_{\de \to 0} \phi_{\de}(u)= {\sgn}(u),
\end{equation}
and
\[
Z_{\de}(t,z)=F_1(t,x)+\phi_{\de}(h(t,x))F_2(t,x),
\]
is the {\it $\phi$--regularization} of the discontinuous
differential system \eqref{PKs1}.

\smallskip

As usual $\nabla h$ denotes the gradient of the function $h$, $\p_x h$ denotes the gradient of the function $h$ restricted to
the variable $x$ and $\p_t h$ denotes the partial derivative of the function $h$ with respect to the variable $t$.

\smallskip

Our main results are given in the next theorems. Its proof uses the theory of Brouwer degree for finite dimensional spaces (see the appendix A for a definition of the Brouwer degree $d_B(f,V,0)$), and is based on the averaging theory for non--smooth differential system stated by Buica and Llibre
\cite{BL} (see Appendix B).

\begin{mtheorem}\label{MRt1}
We consider the following discontinuous differential system
\begin{equation}\label{MRs1}
x'(t)=\e F(t,x)+\e^2R(t,x,\e),
\end{equation}
with
\[
\begin{aligned}
&F(t,x)=F_1(t,x)+\sgn(h(t,x))F_2(t,x),\\
&R(t,x,\e)=R_1(t,x,\e)+\sgn(h(t,x))R_2(t,x,\e),
\end{aligned}
\]
where $F_1,F_2:\R\times D\rightarrow\R^n$, $R_1,R_2:\R\times
D\times(-\e_0,\e_0)\rightarrow\R^n$ and $h:\R\times D\rightarrow \R$
are continuous functions, $T$--periodic in the variable $t$ and $D$
is an open subset of $\R^n$. We also suppose that $h$ is a $\CC^1$
function having $0$ as a regular value.

Define the averaged function $f:D\rightarrow\R^n$ as
\begin{equation}\label{MRf1}
f(x)=\int_0^T F(t,x) dt.
\end{equation}
We assume the following conditions.
\begin{itemize}
\item[$(i)$] $F_1,\,F_2,\,R_1,\,R_2$ and $h$ are locally Lipschitz
with respect to $x$;

\item[$(ii)$] there exists an open bounded set $C\subset D$ such that, for $|\e|>0$ sufficiently small, every orbit starting in $C$ reaches the set of discontinuity only at its crossing regions ({\it crossing hypothesis}).

\item[$(iii)$] for $a\in C$ with $f(a)=0$, there exist a neighbourhood
$U\subset C$ of $a$ such that $f(z)\neq 0$ for all $z\in\overline{U}
\backslash\{a\}$ and $d_B(f,U,0)\neq 0$.
\end{itemize}
Then, for $|\e|>0$ sufficiently small, there exists a $T$--periodic
solution $x(t,\e)$ of system \eqref{MRs1} such that $x(0,\e)\to
a$ as $\e\to 0$.
\end{mtheorem}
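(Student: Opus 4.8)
The plan is to pass through the regularized system and then invoke the continuous averaging theorem of Buica--Llibre (Appendix B). First I would introduce, for each fixed small $\de\in(0,1]$, the $\phi$--regularized system
\[
x'(t)=\e\, F_\de(t,x)+\e^2 R_\de(t,x,\e),
\]
where $F_\de(t,x)=F_1(t,x)+\phi_\de(h(t,x))F_2(t,x)$ and similarly for $R_\de$, with $\phi$ the piecewise linear transition function \eqref{e1}. Since $\phi_\de\circ h$ is continuous and, by hypothesis $(i)$, locally Lipschitz in $x$ (composition of Lipschitz maps, using that $h$ is $\CC^1$ hence locally Lipschitz), the vector fields $F_\de$ and $R_\de$ satisfy the regularity hypotheses of the Buica--Llibre averaging theorem. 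Its averaged function is
\[
f_\de(x)=\int_0^T F_\de(t,x)\,dt=\int_0^T \big(F_1(t,x)+\phi_\de(h(t,x))F_2(t,x)\big)\,dt.
\]

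The key analytic step is to show that $f_\de\to f$ as $\de\to 0$, uniformly on the closure of a suitable neighbourhood, together with the degree stability that this entails. Fix $a\in C$ with $f(a)=0$ and the neighbourhood $U$ from hypothesis $(iii)$. Pointwise, $\phi_\de(h(t,x))\to\sgn(h(t,x))$ for every $(t,x)$ with $h(t,x)\neq0$ by \eqref{e2}; the set of $(t,x)$ where $h(t,x)=0$ has measure zero in $[0,T]\times \overline U$ because $0$ is a regular value of $h$ (so $\Sigma$ is a codimension-one submanifold). Since $|\phi_\de|\le1$ and $F_2$ is continuous hence bounded on $[0,T]\times\overline U$, the dominated convergence theorem gives $f_\de(x)\to f(x)$ for each $x\in\overline U$; a standard equicontinuity argument (the family $f_\de$ is uniformly Lipschitz on $\overline U$ by $(i)$) upgrades this to uniform convergence on $\overline U$. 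By the properties of the Brouwer degree (continuity/invariance under uniform perturbations not vanishing on the boundary, Appendix A), since $f$ does not vanish on $\partial U$ and $d_B(f,U,0)\neq0$, for $\de$ small enough $f_\de$ also does not vanish on $\partial U$ and $d_B(f_\de,U,0)=d_B(f,U,0)\neq0$.

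Now fix such a small $\de_0$. Applying the Buica--Llibre theorem to the continuous system with right-hand side $\e F_{\de_0}+\e^2 R_{\de_0}$ yields, for $|\e|>0$ sufficiently small, a $T$--periodic solution $x_{\de_0}(t,\e)$ with $x_{\de_0}(0,\e)\to a$ as $\e\to0$. The final step is to transfer this conclusion back to the original discontinuous system \eqref{MRs1}: here is where hypothesis $(ii)$, the crossing hypothesis, does the work. Because every orbit of \eqref{MRs1} starting in $C$ meets $\Sigma$ only at crossing points, the $\phi$--regularized trajectories and the Filippov/discontinuous trajectories with the same initial condition in $C$ are related in such a way that, as $\de\to0$, the regularized periodic orbit converges to a genuine periodic orbit of the discontinuous system — crossing (as opposed to sliding) is precisely the regime where the regularization limit reproduces the discontinuous flow. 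Concretely, one argues that for each small $\e$ the fixed point of the Poincaré map persists: the displacement maps of $Z_\de$ converge uniformly (on the relevant compact set, for initial conditions staying in $C$ by $(ii)$) to that of $Z$ as $\de\to 0$, so a nonzero-degree zero of the limiting displacement map exists, giving the desired $T$--periodic solution $x(t,\e)$ of \eqref{MRs1} with $x(0,\e)\to a$.

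I expect the main obstacle to be this last transfer step — making rigorous that, under the crossing hypothesis, the $\phi$--regularized periodic solutions converge (as $\de\to0$, uniformly in the small parameter $\e$, or with an appropriate interchange of the limits $\de\to0$ and the fixed-point argument) to a periodic solution of the discontinuous system. The order of the two limiting processes ($\de\to0$ versus $\e\to0$) must be handled carefully, and one needs the crossing condition to guarantee both that the orbits stay in $C$ and that no sliding motion develops that the regularization would smear out differently from the Filippov convention.
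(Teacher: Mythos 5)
Your route is the paper's route: regularize with the piecewise--linear transition function, check that $F_{\de},R_{\de}$ are locally Lipschitz (Proposition \ref{MRl3}), pass the zero and the Brouwer degree of $f$ to the averaged functions $f_{\de}$ of the regularized systems, apply the continuous averaging theorem (Theorem \ref{ApBt1}) to \eqref{MRs1a}, and finally let $\de\to 0$ using the crossing hypothesis. Two remarks on the middle step. First, your justification of uniform convergence is not right as stated: the family $\{f_{\de}\}$ is \emph{not} uniformly Lipschitz from hypothesis $(i)$, because the Lipschitz constant of $\phi_{\de}\circ h$, hence of $F_{\de}$, is of order $1/\de$; uniform convergence must instead come from a measure estimate of the type $|f_{\de}(x)-f(x)|\leq 2M\,\mu\{t\in[0,T]:|h(t,x)|\leq\de\}$ (in the spirit of Lemma \ref{MRl2}), or one can do as the paper does and use only pointwise convergence plus a compactness argument on $\partial U$ to get $0\notin f_{\de}(\partial U)$ and then homotopy invariance. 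Second, knowing $d_B(f_{\de},U,0)\neq 0$ is not by itself enough to invoke Theorem \ref{ApBt1}: its hypothesis asks for an isolated zero $a_{\de}$ of $f_{\de}$ with $f_{\de}\neq 0$ on a punctured neighbourhood. The paper extracts $a_{\de}\in U$ with $f_{\de}(a_{\de})=0$ from the nonzero degree, checks the nonvanishing of $f_{\de}$ away from $a_{\de}$, and shows $a_{\de}\to a$; accordingly the regularized periodic solutions satisfy $x_{\de}(0,\e)\to a_{\de}$ as $\e\to 0$, not $\to a$ as you wrote --- the convergence to $a$ only appears after the second limit $\de\to 0$.

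The genuine gap is the transfer step that you yourself flag as the main obstacle and only sketch. Your proposed mechanism (uniform convergence of the displacement maps of $Z_{\de}$ to that of $Z$, then a degree argument for the limiting displacement map) would force you to control the $\CO(\e^2)$ remainder in $P^{\e}_{\de}(z)=z+\e f_{\de}(z)+\CO(\e^2)$ uniformly in $\de$, which is delicate precisely because the Lipschitz constants blow up as $\de\to 0$. The paper closes the step differently and more economically: for each fixed small $\e$, the crossing hypothesis $(ii)$ (via Filippov's crossing lemma, Proposition \ref{MRp1}, which also gives uniqueness of the Filippov solutions) yields $x_{\de}(t,z,\e)\to x(t,z,\e)$ as $\de\to 0$, hence $P^{\e}_{\de}\to P^{\e}$ pointwise; one then takes the fixed points $z_{(\de,\e)}$ of $P^{\e}_{\de}$ furnished by Theorem \ref{ApBt1} and passes to the limit $z_{\e}=\lim_{\de\to 0}z_{(\de,\e)}$, which is a fixed point of $P^{\e}$, i.e. a $T$--periodic Filippov solution of \eqref{MRs1}, with $z_{\e}\to a$ as $\e\to 0$ because $a_{\de}\to a$. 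So the conclusion is reached by a limit of fixed points rather than by a degree computation at the $\de$--limit level; without some such device (or a genuinely uniform-in-$\de$ estimate), your sketch of the last step does not yet constitute a proof.
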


Theorem \ref{MRt1} is proved in section \ref{s2}.

\smallskip

In order to stablish a theorem with weaker hypotheses we denote by $D_0$ the set of points $z\in D$ such that the map $h_z:t\in[0,T]\mapsto h(t,z)$ has only isolated zeros. Clearly $\textrm{Int}(D_0)\neq \emptyset$.

\begin{mtheorem}\label{MRt2}
In addition to the assumptions of Theorem  \eqref{MRt1} unless condition $(ii)$ we assume the following hypothesis.
\begin{itemize}
\item[$(ii')$] there exists an open bounded set $C\subset D_0$ such that, for $\e>0$ sufficiently small, every orbit starting in $C$ reaches the set of discontinuity only at its crossing regions.
\end{itemize}
Then, for $\e>0$ sufficiently small, there exists a $T$--periodic
solution $x(t,\e)$ of system \eqref{MRs1} such that $x(0,\e)\to
a$ as $\e\to 0$.
\end{mtheorem}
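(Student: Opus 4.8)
The plan is to follow the scheme of the proof of Theorem~\ref{MRt1} given in Section~\ref{s2}, keeping track of the only place where the new hypothesis $(ii')$ (in particular the requirement $C\subset D_0$) is needed. For $\de\in(0,1]$ I would introduce the $\phi$--regularized system
\[
x'=\e\big(F_1(t,x)+\phi_\de(h(t,x))F_2(t,x)\big)+\e^2\big(R_1(t,x,\e)+\phi_\de(h(t,x))R_2(t,x,\e)\big),
\]
which by $(i)$ and the Lipschitz character of $\phi_\de$ is continuous, $T$--periodic in $t$ and locally Lipschitz in $x$, so it is covered by the Buica--Llibre averaging theorem of Appendix~B, with averaged function $f_\de(x)=\int_0^T\big(F_1(t,x)+\phi_\de(h(t,x))F_2(t,x)\big)\,dt$.

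The first step is the convergence $f_\de\to f$ on the set $C$ supplied by $(ii')$. For $z\in C\subset D_0$ the map $h_z\colon t\mapsto h(t,z)$ has only isolated zeros on the compact interval $[0,T]$, hence finitely many, so $\{t\in[0,T]:h(t,z)=0\}$ is Lebesgue--null; since $\phi_\de(u)\to\sgn(u)$ for $u\ne0$ and $|\phi_\de|\le1$, dominated convergence gives $f_\de(z)\to f(z)$, and in particular $f$ in \eqref{MRf1} is well defined on $C$ (this is exactly why one restricts to $D_0$). I would then upgrade this to $\sup_{x\in\overline U}|f_\de(x)-f(x)|\to0$, where $U\subset C$ is the neighbourhood of $a$ from $(iii)$, by controlling uniformly for $x\in\overline U$ the Lebesgue measure of the strip $\{t\in[0,T]:|h(t,x)|\le\de\}$ (here $0$ being a regular value of $h$ is used) and combining it with the uniform continuity of $F_1,F_2$ on the relevant compact set. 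Since $(iii)$ gives $m:=\min_{\partial U}|f|>0$, the uniform convergence yields $d_B(f_\de,U,0)=d_B(f,U,0)\ne0$ for all small $\de$, so Appendix~B provides, for each small $\de$ and each $\e>0$ small, a $T$--periodic solution of the regularized system through a point of $U$ that tends, as $\e\to0$, to a zero of $f_\de$ in $U$.

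It remains to remove the regularization, which is where $(ii')$ proper enters: for $\e>0$ small every orbit of \eqref{MRs1} issued from $C$ meets $\Sigma$ only at \emph{crossing} points, so by the regularization results recalled in the Introduction (\cite{ST,LT2,T}) the flow of the regularized system converges, as $\de\to0$, to the Filippov flow $\varphi(t,x,\e)$ of \eqref{MRs1}, uniformly on $[0,T]\times\overline C$; moreover, transversality of the crossings forces each regularized orbit to spend only time $O(\de)$ inside the strip $|h|<\de$, which reconciles this limit with the $\e$--expansion of the displacement map. Passing to the limit $\de\to0$ and using $f_\de\to f$ uniformly on $\overline U$, one gets that $x\mapsto\varphi(T,x,\e)-x$ equals $\e f(x)+o(\e)$ uniformly on $\overline U$ as $\e\to0^+$; hence $d_B\big(\varphi(T,\cdot,\e)-\cdot\,,\,U,0\big)=d_B(f,U,0)\ne0$ for $\e>0$ small, so \eqref{MRs1} has a $T$--periodic solution $x(t,\e)$ with $x(0,\e)\in U$, and every accumulation point of $x(0,\e)$ as $\e\to0^+$ lies in $\overline U$ and is a zero of $f$, hence equals $a$ by $(iii)$. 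The step I expect to be the main obstacle is precisely this interchange of the limits $\de\to0$ and $\e\to0$: the Lipschitz constant of $\phi_\de$ grows like $1/\de$, so the averaging estimates for the regularized systems are not uniform in $\de$, and one must compensate by using the transversality of the crossings provided by $(ii')$ together with the $D_0$ condition to get uniform control, on $\overline U$, of both the averaged functions and the Poincaré maps.
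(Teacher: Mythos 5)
Your first half (regularize, verify via Lemma-type arguments that $F_\de,R_\de$ are locally Lipschitz, use $C\subset D_0$ to get $f_\de\to f$, and transfer the degree condition of hypothesis $(iii)$ from $f$ to $f_\de$) is the same strategy as the paper's, and your uniform-convergence route to $d_B(f_\de,U,0)=d_B(f,U,0)\neq 0$ is, if anything, tidier than the paper's homotopy claim. The genuine gap is the final step, and you have flagged it yourself: you want the expansion $\varphi(T,x,\e)-x=\e f(x)+o(\e)$ uniformly on $\ov U$ for the Filippov flow, obtained by letting $\de\to 0$ in $P^{\e}_{\de}(z)=z+\e f_{\de}(z)+\CO(\e^2)$; but the $\CO(\e^2)$ remainder is controlled through the Lipschitz constant of $F_\de$, which grows like $1/\de$, so it is not uniform in $\de$, and the proposed compensation ``by transversality of the crossings'' is asserted rather than proved. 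As written you are trying to establish a first-order averaging expansion for the discontinuous Poincar\'e map itself, which is a stronger statement than anything the paper proves, and without it the degree computation for the displacement map $\varphi(T,\cdot,\e)-\mathrm{id}$ has no justification.

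The paper's proof (it proves Theorem \ref{MRt1} and declares the proof of Theorem \ref{MRt2} completely analogous, the only role of $(ii')$ and $D_0$ being to guarantee that $h_z$ has isolated zeros so that $f$ is continuous and the crossing argument works for $\e>0$) avoids this interchange of limits altogether: it never expands $P^{\e}$. For each fixed $\de\in(0,\de_0]$ it applies Theorem \ref{ApBt1} to the continuous system \eqref{MRs1a}, whose averaged function $f_\de$ has a zero $a_\de\in U$ with $d_B(f_\de,U,0)\neq0$ and $a_\de\to a$; this yields fixed points $z_{(\de,\e)}$ of $P^{\e}_{\de}$ with $z_{(\de,\e)}\to a_\de$ as $\e\to 0$. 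Then, for fixed small $\e>0$, hypothesis $(ii')$ (crossing only) gives $P^{\e}_{\de}\to P^{\e}$ as $\de\to 0$, and the paper concludes that $z_\e=\lim_{\de\to 0}z_{(\de,\e)}$ is a fixed point of $P^{\e}$, i.e. the initial condition of the desired $T$-periodic Filippov solution. Thus the Brouwer degree is only ever applied to the continuous averaged functions $f_\de$, and the $\de\to 0$ limit is taken on fixed points of Poincar\'e maps, where the non-uniformity of the averaging remainder never enters. To repair your argument you must either actually prove the uniform $o(\e)$ estimate for the Filippov displacement map (a substantial additional argument, essentially a direct averaging theorem for discontinuous systems) or reorder the limits as the paper does.
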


Theorem \ref{MRt2} is proved in section \ref{s2}.

\begin{remark}
Assuming the hypotheses of Theorem \ref{MRt1}, we have that for $\e=0$ its solutions starting in $C$, i.e. straight lines $\{(t,z):\,t\in\R\}$ for $z\in C$, reaches the set of discontinuity only at its crossing region.  This fact is not necessarily true when we assume the hypotheses of Theorem \ref{MRt2}, because the crossing hypothesis holds only for $\e>0$. This is the main difference between Theorems \ref{MRt1} and \ref{MRt2}. Nevertheless we shall see that to prove both theorems we just have to guarantee that the map $h_z:t\in[0,T]\mapsto h(t,z)$ for $z\in C$ has only isolated zeros. After that the proof follows similarly for both theorems.
\end{remark}

\begin{proposition}\label{hiia}
Assume that $\p_t h(t,x) \neq0$ for each $(t,x)\in\Sigma$. Then hypothesis $(ii)$ holds.
\end{proposition}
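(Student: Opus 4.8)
The plan is to pass to the autonomous extended system and observe that the crossing condition for its discontinuity manifold $\Sigma=h^{-1}(0)$ holds trivially at $\e=0$ --- where the orbits are the lines $t\mapsto(t,z)$ and $\langle\nabla h,(1,0)\rangle=\p_t h\neq0$ on $\Sigma$ --- and then make this openness of transversality robust, uniformly in the initial condition, by a compactness argument.

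First, rewrite \eqref{MRs1} as the autonomous system $(t,x)'=\big(1,\,\e F(t,x)+\e^2R(t,x,\e)\big)$ on $\R\times D$; its discontinuity set is $\Sigma$, a $\CC^1$ hypersurface since $0$ is a regular value of $h$. On $\{h>0\}$ the field is $\widetilde X_\e(t,x)=\big(1,\,\e X(t,x)+\e^2(R_1+R_2)(t,x,\e)\big)$ and on $\{h<0\}$ it is $\widetilde Y_\e(t,x)=\big(1,\,\e Y(t,x)+\e^2(R_1-R_2)(t,x,\e)\big)$, where $X=F_1+F_2$ and $Y=F_1-F_2$. A point $q\in\Sigma$ is a crossing point exactly when $\langle\nabla h(q),\widetilde X_\e(q)\rangle\,\langle\nabla h(q),\widetilde Y_\e(q)\rangle>0$, and
\[
\langle\nabla h,\widetilde X_\e\rangle=\p_t h+\e\,\langle\p_x h,X\rangle+\e^2\langle\p_x h,R_1+R_2\rangle,
\]
with the analogous identity for $\widetilde Y_\e$; in particular both reduce to $\p_t h$ at $\e=0$.

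Next, fix any open bounded $C$ with $\overline C\subset D$ and an open bounded $C'$ with $\overline C\subset C'\subset\overline{C'}\subset D$. Since $F_1,F_2$ are bounded on the compact set $[0,T]\times\overline{C'}$ and $R_1,R_2$ on $[0,T]\times\overline{C'}\times[-\e_0/2,\e_0/2]$, a standard continuity estimate shows that for $|\e|$ small every solution of \eqref{MRs1} starting in $C$ remains in $C'$ for $t\in[0,T]$, its $x$-increment on $[0,T]$ being $O(\e)$ uniformly; by $T$-periodicity of the field it then suffices to verify the crossing condition on the compact set $K:=\Sigma\cap([0,T]\times\overline{C'})$. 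By hypothesis $\p_t h\neq0$ on $\Sigma$, so $m:=\min_K|\p_t h|>0$; let $M\geq1$ bound $|\p_x h|\,(|X|+|Y|)$ on $[0,T]\times\overline{C'}$ and $|\p_x h|\,(|R_1+R_2|+|R_1-R_2|)$ on $[0,T]\times\overline{C'}\times[-\e_0/2,\e_0/2]$. Then for $q\in K$ and $0<|\e|\leq\min\{1,\e_0/2\}$ the displayed identity gives $\big|\langle\nabla h(q),\widetilde X_\e(q)\rangle-\p_t h(q)\big|\leq2|\e|M$, and similarly for $\widetilde Y_\e$; if moreover $|\e|<m/(4M)$, both Lie derivatives lie within $m/2$ of $\p_t h(q)$, whose modulus is at least $m$, hence both carry the sign of $\p_t h(q)$ and their product is positive. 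Thus every point of $K$ is a crossing point, so every orbit starting in $C$ meets $\Sigma$ only at crossing points, i.e. hypothesis $(ii)$ holds.

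I expect the only genuine obstacle to be the uniformity bookkeeping: trapping all orbits issued from $C$ inside the fixed compact cylinder $[0,T]\times\overline{C'}$ for every sufficiently small $\e$ --- a Gronwall-plus-connectedness argument, mildly delicate because $R$ depends on $\e$ and $D$ is merely open --- and then invoking compactness of $K$ to make the thresholds $\min\{1,\e_0/2\}$ and $m/(4M)$ uniform. Everything else is the soft remark that transversality of $h^{-1}(0)$ to the flow is an open condition already satisfied at $\e=0$.
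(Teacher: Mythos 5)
Your proof is correct and follows essentially the same route as the paper: pass to the autonomous extended system, observe that both Lie derivatives $\langle\nabla h,\widetilde X_\e\rangle$ and $\langle\nabla h,\widetilde Y_\e\rangle$ reduce to $\p_t h$ at $\e=0$, and use compactness plus the hypothesis $\p_t h\neq 0$ on $\Sigma$ to make the crossing condition $(Xh)(Yh)>0$ hold uniformly for $|\e|$ small. The only difference is bookkeeping: you bound each Lie derivative separately on a fixed compact cylinder $[0,T]\times\overline{C'}$ after trapping the orbits there, whereas the paper expands the product $(Xh)(Yh)$ in powers of $\e$ and works on the compact set of actual orbit points; this is an inessential (and arguably slightly cleaner) variation.
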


\begin{proposition}\label{hiib}
Assume that for each $p\in\Sigma$ such that $\p_t h(p)=0$ there exists a continuous positive function $\xi_p:\R\times D\rightarrow\R$ for which the inequality
\[
\left(\p_th\langle\p_x h,F_1\rangle+\e\dfrac{\langle\p_x h,F_1\rangle^2- \langle\p_x h,F_2\rangle^2}{2}\right)(t,x)\geq \e\xi_p(t,x)
\]
holds for each $(t,x)\in\Sigma_p$. Then hypothesis $(ii')$ holds.
\end{proposition}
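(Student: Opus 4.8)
The plan is to express the crossing condition via Filippov's convention and reduce the statement to a sign estimate on $\Sigma$. First I would make system \eqref{MRs1} autonomous on $\R\times D$ by appending the equation $t'=1$, so that the fields governing $\{h>0\}$ and $\{h<0\}$ become $\widehat X_\e=(1,\,\e(F_1+F_2)+\e^2(R_1+R_2))$ and $\widehat Y_\e=(1,\,\e(F_1-F_2)+\e^2(R_1-R_2))$. Since $h\in\CC^1$ and $0$ is a regular value, $\Sigma$ is a $\CC^1$ hypersurface and the Lie derivatives
\[
\widehat X_\e h=\p_t h+\langle\p_x h,\,\e(F_1+F_2)+\e^2(R_1+R_2)\rangle,\qquad \widehat Y_\e h=\p_t h+\langle\p_x h,\,\e(F_1-F_2)+\e^2(R_1-R_2)\rangle
\]
are continuous on $\Sigma$; a point $p\in\Sigma$ lies in the crossing region for the parameter $\e$ exactly when $(\widehat X_\e h)(p)\,(\widehat Y_\e h)(p)>0$. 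Because $\mathrm{Int}(D_0)\neq\emptyset$, we may take the set $C$ of hypothesis $(ii')$ to be any open bounded set with $\overline C\subset\mathrm{Int}(D_0)$; it then remains to show that, for such $C$ and $\e>0$ small, every point of $\Sigma$ that an orbit issuing from $C$ can meet is a crossing point.

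The core computation is the expansion of the product. Using $\langle\p_x h,R_1+R_2\rangle+\langle\p_x h,R_1-R_2\rangle=2\langle\p_x h,R_1\rangle$, one finds
\[
(\widehat X_\e h)(\widehat Y_\e h)=(\p_t h)^2+2\e\!\left(\p_t h\,\langle\p_x h,F_1\rangle+\e\,\frac{\langle\p_x h,F_1\rangle^2-\langle\p_x h,F_2\rangle^2}{2}\right)+2\e^2\,\p_t h\,\langle\p_x h,R_1\rangle+\e^3\,\Theta(t,x,\e),
\]
where $\Theta$ is continuous and bounded on compact subsets of $\R\times D\times(-\e_0,\e_0)$. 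The parenthesised expression is exactly the quantity estimated in the hypothesis, and the leftover $\e^2$--term carries a factor $\p_t h$, so it is small precisely where the leading term $(\p_t h)^2$ ceases to dominate the sign.

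Next I would localise. Since $|x'|\le\e M$ on a fixed compact neighbourhood of $[0,T]\times\overline C$, for $\e>0$ small every Filippov solution issuing from $C$ stays on $[0,T]$ inside a fixed compact set $K\subset\R\times D$; hence it suffices to prove that for $\e>0$ small every point of the compact set $\Sigma\cap K$ is a crossing point. Put $\Sigma^\ast=\{p\in\Sigma:\p_t h(p)=0\}$ and cover the compact $\Sigma^\ast\cap K$ by finitely many small balls $V_j$ centred at $p_j\in\Sigma^\ast$, chosen small enough that $V_j\cap\Sigma$ is connected (hence contained in $\Sigma_{p_j}$, since $\Sigma$ is a $\CC^1$ hypersurface), that the continuous positive $\xi_{p_j}$ supplied by the hypothesis satisfies $\xi_{p_j}\ge\mu_j>0$ on $V_j$, and that $|\p_t h|$ is as small as needed on $V_j$. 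On $V_j\cap\Sigma$ the hypothesis bounds the parenthesised term from below by $\e\mu_j$; discarding the nonnegative $(\p_t h)^2$ and absorbing the $\e^2\,\p_t h\,\langle\p_x h,R_1\rangle$ term (small since $|\p_t h|$ is small on $V_j$) and the $\e^3\Theta$ term, one gets $(\widehat X_\e h)(\widehat Y_\e h)\ge\e^2\mu_j>0$ there. On the remaining compact set $\Sigma\cap K\setminus\bigcup_j V_j$, which avoids $\Sigma^\ast$, one has $|\p_t h|\ge c>0$, whence $(\widehat X_\e h)(\widehat Y_\e h)=(\p_t h)^2+O(\e)\ge c^2/2>0$ for $\e$ small. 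Therefore $\Sigma\cap K$ consists entirely of crossing points, orbits from $C$ reach the discontinuity only at its crossing regions, and $C\subset D_0$; this is hypothesis $(ii')$.

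The algebra of the middle step is routine; the two points that need care are the a priori confinement of the Filippov solutions to $K$ before one knows they are crossing orbits — handled by a continuation argument using $|x'|\le\e M$ together with the just-established absence of sliding, escaping and tangential points in $\Sigma\cap K$ — and the replacement of the set $\Sigma_p$ appearing in the hypothesis by a genuine neighbourhood of $p$ in $\Sigma$, which is legitimate only because $0$ is a regular value of $h$, so $\Sigma$ is locally a connected disk. Note that the hypothesis is used only near the tangency set, even though it is assumed on the whole component $\Sigma_p$.
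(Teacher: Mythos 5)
Your proof is correct and shares the paper's overall strategy --- pass to the autonomous system, characterize crossing points by $(Xh)(Yh)>0$, confine the relevant orbit segments to a compact set $K$, and split $\Sigma\cap K$ into a part near the tangency set $\{\p_t h=0\}$ (where the hypothesis enters) and a part where $(\p_t h)^2$ dominates --- but the key technical step is handled differently. The paper disposes of the troublesome cross term $2\e^2\,\p_t h\,\langle\p_x h,R_1\rangle$ by completing the square, writing $(Xh)(Yh)=\bigl(\p_t h+\e^2\langle\p_x h,R_1\rangle\bigr)^2+2\e\bigl(\p_t h\langle\p_x h,F_1\rangle+\e\tfrac{1}{2}(\langle\p_x h,F_1\rangle^2-\langle\p_x h,F_2\rangle^2)\bigr)+\e^3\CO(1)$, and then applies the hypothesized lower bound on the \emph{entire} components $\Sigma_{p_1},\dots,\Sigma_{p_m}$ that contain tangency points (finitely many, since $\Sigma\cap K$ has finitely many components), with the argument of Proposition \ref{hiia} covering the remaining components. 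You keep the raw expansion and instead tame the same cross term by shrinking the balls $V_j$ around tangency points so that $|\p_t h|$ is small there, using compactness of the tangency set in $K$ for a finite cover and a uniform bound $|\p_t h|\ge c>0$ on the complement. Both routes are sound; the square trick gives an estimate valid on whole components with no smallness condition on $\p_t h$, matching exactly how the hypothesis is phrased, whereas your localisation makes explicit that the inequality is really only needed on a neighbourhood of the tangency set inside $\Sigma$, so the proposition holds under that formally weaker assumption. Your a priori confinement of the Filippov solutions via the bound $|x'|\le\e M$ is also a slightly cleaner way to obtain the compact set $K$ than the paper's definition of $K$ through the solutions themselves.
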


It is worthwile to say that the averaging theory appears as a very useful tool in discontinuous
dynamical systems.
%Thus, for instance, Llibre, Novaes and Teixeira
%\cite{LNT} have used the averaging theory to provide sufficient
%conditions for the existence of periodic solutions of the planar
%double pendulum with discontinuous perturbation. In this work, is was stated for the first time the hypothesis (iii), which the authors called by {\it Crossing Hypotheis}.
For example, in \cite{LT1}, lower bounds for the maximum number of limit cycles for the $m$--piecewise
discontinuous polynomial differential equations was provided using the averaging theory. In \cite{CCL} the averaging theory was used to study
the bifurcation of limit cycles from discontinuous perturbations of two and four dimensional linear center in $\R^n$. Also, in
\cite{LR}, the averaging theory was applied to study
the number of limit cycles of the discontinuous piecewise linear
differential systems in $\R^{2n}$ with two zones separated by a
hyperplane.

\smallskip

In Theorems \ref{MRt1} and \ref{MRt2} we have extended to general discontinuous
differential systems the ideas used in the previous mentioned papers
for particular discontinuous differential systems.

\smallskip

Now an application of Theorem \ref{MRt1} to a class of
discontinuous piecewise linear differential systems is given. Such systems have been studied recently
by Han and Zhang \cite{HZ}, and Huan and Yuang \cite{HY}, among
other papers. In \cite{HZ} some results about the existence of two
limit cycles appeared, so that the authors conjectured that the
maximum number of limit cycles for this class of piecewise linear
differential systems is exactly two. This conjecture is analogous to
Conjecture 1 in the discussion of Tonnelier in \cite{To}. However,
by considering a specific family of discontinuous PWL differential
systems with two linear zones sharing the equilibrium position, in
\cite{HY} strong numerical evidence about the existence of three
limit cycles was obtained, and a proof was provided by Llibre and
Ponce \cite{LP}. This example represents up to now the first
discontinuous piecewise linear differential system with two zones
and $3$ limit cycles surrounding a unique equilibrium. Now we shall
provide a new proof of the existence of these three limit cycles through Theorem \ref{MRt1}.

\smallskip

In polar coordinates $(r,\T)$ given by $x=r \cos \T$ and $y=r
\sin\T$, the planar discontinuous piecewise linear differential
system with two zones separated by a straight line corresponding to
the system studied in the paper \cite{LP} is
\begin{equation}\label{E1}
\frac{dr}{d\T} = F(\T,r)=\left\{ \begin{array}{ll} \e
\dfrac{19}{50}\, r &
\mbox{if $r \cos\T\geq 1$,}\vspace{0.2cm}\\
\e \dfrac{2300 \cos (2 \theta )-4623 \sin (2 \theta )-300}{1500}\, r
& \mbox{if $r \cos\T< 1$,}
\end{array}
\right.
\end{equation}
where we have multiplied the right hand side of the system of
\cite{LP} by the small parameter $\e$. Our main contribution in
this application is to provide the explicit analytic equations
defining the limit cycles of the discontinuous piecewise linear
differential system with two zones \eqref{E1}.

\begin{theorem}\label{t1}
Any limit cycle of the discontinuous piecewise linear differential
system with two zones \eqref{E1} which intersects the straight line
$x=1$ in two points $(r_0,\T_0)$ and $(r_1,\T_1)$ with
$-\pi/2<\T_0<0<\T_1<\pi/2$, $r_k \cos \T_k=1$ for $k=0,1$, and
$r_0>1$ and $\T_1$ must satisfy the following two equations
\begin{equation}\label{E2}
\begin{array}{l}
\exp\left( \dfrac{19 (\T_1-\T_0)}{50} \right) r_0
\cos \T_1-1=0, \vspace{0.3cm}\\
\dfrac{19 (\T_1-\T_0)}{50}+\dfrac{1}{5} \arctan\left(\dfrac{1}{15}
\sec \T_0 (23 \cos\T_0-100 \sin \T_0)\right)\vspace{0.2cm}\\
\quad -\dfrac{1}{5} \arctan\left(\dfrac{1}{15} \sec \T_1 (23 \cos
\T_1-100 \sin \T_1)\right)\vspace{0.2cm}\\
\quad -\dfrac{1}{2} \log (|4623 \cos (2 \T_0)+2300 \sin (2
\T_0)-5377|)\vspace{0.2cm}\\
\quad +\dfrac{1}{2} \log (|4623 \cos (2 \T_1)+2300 \sin (2
\T_1)-5377|)-\dfrac{2 \pi }{5}=0,
\end{array}
\end{equation}
where $\T_0= \arccos(1/r_0)-\pi$, and the determination of the
$\arctan$ is in the interval $(-\pi/2,\pi/2)$ and of the $\arccos$
in the interval $(0,\pi)$.

On the other hand, any limit cycle of system \eqref{E1} which
intersects the straight line $x=1$ in two points $(r_0,\T_0)$ and
$(r_1,\T_1)$ with $0<\T_0<\T_1<\pi/2$, $r_k \cos \T_k=1$ for
$k=0,1$, and $r_0>1$ and $\T_1$ must satisfy the equations
\eqref{E2}, but now in both equations $\T_0= \arccos(1/r_0)$.
\end{theorem}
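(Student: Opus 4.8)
\medskip

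The plan is to exploit the fact that, on each side of the line of discontinuity $x=1$, the right--hand side of \eqref{E1} is a linear vector field in the radial variable $r$, so that the scalar equation for $r(\T)$ along a limit cycle can be integrated in closed form; the two identities \eqref{E2} will then be precisely the constraints imposed on the two pieces of such a cycle by $2\pi$--periodicity together with the incidence conditions $r_k\cos\T_k=1$, $k=0,1$, at the two points where the cycle crosses $x=1$.

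First I would fix the geometry. A limit cycle of \eqref{E1} meeting $x=1$ at two points is a $2\pi$--periodic solution $r(\T)>0$ that winds once around the origin, and the two (transversal) crossing points $(r_0,\T_0)$ and $(r_1,\T_1)$ split it into two arcs. Since the origin, where $x=0<1$, lies inside the cycle, the arc passing through $\T=\pi$ (the Cartesian point $(-r(\pi),0)$) stays in $\{x\le1\}$; hence the complementary arc --- the short one, through $\T=0$ --- is contained in $\{x\ge1\}$. Therefore, in the first case of the statement, $r$ satisfies $dr/d\T=\e\,\tfrac{19}{50}\,r$ on $[\T_0,\T_1]$ and the second (linear in $r$) branch of \eqref{E1} on $[\T_1,\T_0+2\pi]$.

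Next I would integrate and match. On the short arc $r(\T)=r_0\exp\!\big(\e\tfrac{19}{50}(\T-\T_0)\big)$; evaluating at $\T=\T_1$ and using $r_1\cos\T_1=1$ produces the first equation of \eqref{E2}. On the long arc $\ln r(\T)-\ln r_1=\e\int_{\T_1}^{\T}g(s)\,ds$, where $g$ is the coefficient in the second branch of \eqref{E1}, and $2\pi$--periodicity of the cycle forces $r(\T_0+2\pi)=r_0$, so $\ln r_0-\ln r_1=\e\int_{\T_1}^{\T_0+2\pi}g$. Subtracting from this the short--arc relation $\ln r_1-\ln r_0=\e\,\tfrac{19}{50}(\T_1-\T_0)$ cancels the logarithms, and dividing by $\e$ leaves
\[
\frac{19}{50}(\T_1-\T_0)+\int_{\T_1}^{\T_0+2\pi}g(\T)\,d\T=0 .
\]
Writing $\int_{\T_1}^{\T_0+2\pi}g=\int_{0}^{2\pi}g-\int_{\T_0}^{\T_1}g$ and using the full--period value $\int_0^{2\pi}g(\T)\,d\T=-2\pi/5$, it only remains to evaluate $\int_{\T_0}^{\T_1}g$ over the interval $[\T_0,\T_1]\subset(-\pi/2,\pi/2)$, where a primitive of $g$ is an explicit combination of $\arctan\!\big(\tfrac1{15}\sec\T\,(23\cos\T-100\sin\T)\big)$ and $\log|4623\cos2\T+2300\sin2\T-5377|$; inserting the endpoints yields the remaining terms of the second equation of \eqref{E2}. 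Finally, $\T_0$ is recovered from $r_0$ by solving $r_0\cos\T_0=1$ within the prescribed sign range, which gives the stated expression in terms of $\arccos(1/r_0)$. The second case ($0<\T_0<\T_1<\pi/2$) follows from exactly the same computation; only the determination of $\arccos$ used to express $\T_0$ changes.

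The step I expect to be the real work is the last integration: producing a primitive of the second branch of \eqref{E1} in closed form and checking that, after substituting $\T_0$ and $\T_1$ and adding the full--period contribution $-2\pi/5$, it reproduces precisely the combination of $\arctan$ and $\log$ terms in \eqref{E2}, including keeping track of the chosen determinations of $\arctan$ (in $(-\pi/2,\pi/2)$) and of $\arccos$ (in $(0,\pi)$). A secondary point requiring a short but careful argument is the geometric claim above, that the short arc is the one lying in $\{x\ge1\}$; this can be settled from the transversality of the two crossings together with the sign of $\tfrac{d}{d\T}\big(r(\T)\cos\T\big)$ at $\T=\T_0$ and $\T=\T_1$.
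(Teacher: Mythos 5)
Your overall scheme (integrate each branch in closed form, match at the two crossings, impose $2\pi$--periodicity) reproduces the paper's derivation of the \emph{first} equation of \eqref{E2}, but the step you yourself flag as ``the real work'' is exactly where the argument breaks. The second branch of \eqref{E1} is $\e r$ times the trigonometric polynomial $g(\T)=\bigl(2300\cos 2\T-4623\sin 2\T-300\bigr)/1500$, and a primitive of $g$ is elementary,
\[
\int g(\T)\,d\T=\frac{1150\sin 2\T+\tfrac{4623}{2}\cos 2\T-300\,\T}{1500},
\]
with no $\arctan$ and no logarithm. Hence your closing condition $\tfrac{19}{50}(\T_1-\T_0)+\int_{\T_1}^{\T_0+2\pi}g(\T)\,d\T=0$ is a genuinely different equation from the second equation of \eqref{E2}: at the middle--cycle data \eqref{E5} the left--hand side of the paper's second equation is $\approx 0$, while your expression evaluates to $\approx -0.198$. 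The $\arctan$/$\log$ combination in \eqref{E2} is the primitive of a \emph{rational} trigonometric function, namely of $\bigl(2300\cos 2\T-4623\sin 2\T-300\bigr)$ divided by the positive quadratic form $754\cos^2\T-4600\sin\T\cos\T+10000\sin^2\T$, which is what $dr/d\T$ looks like in the zone $x<1$ for the genuine Cartesian piecewise linear system of \cite{LP}; moreover the constant $-2\pi/5$ then comes from the two jumps of size $\pi/5$ of the $\arctan$ at $\T=\pm\pi/2$, not from averaging the constant term of a trigonometric polynomial. So to carry out your plan you would have to integrate that rational expression (with the branch--jump bookkeeping), not the branch literally displayed in \eqref{E1}; as written, inserting the endpoints of the true primitive of \eqref{E1} does not yield \eqref{E2}.

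Note also that your route is structurally different from the paper's: the paper obtains only the first equation by solving the system in $x\ge 1$, and gets the second equation as the vanishing of the averaged function \eqref{E3} required by Theorem \ref{MRt1} (after dividing by $r$), i.e.\ through the averaging theorem rather than through an exact periodicity matching of the two arcs (and, like you, it suppresses the factor $\e$ in the exponential, implicitly taking $\e=1$ as in \cite{LP}). An exact, $\e$--independent closing argument of the kind you propose is a legitimate alternative in principle, but to be a proof of the stated theorem it must produce precisely the equations \eqref{E2}, and with the integrand taken from \eqref{E1} it does not.
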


We recall that a {\it limit cycle} of system \eqref{E1} is an
isolated periodic orbit of that system in the set of all periodic
orbits of the system. It is well known that the study of the limit
cycles of the differential systems in dimension two is one of the
main problems of the qualitative theory of differential systems in
dimension two, see for instance the surveys of Ilyashenko \cite{Il}
and Jibin Li \cite{JL}.

\smallskip

In fact, as we shall see in the proof of Theorem \ref{t1} equations
\eqref{E2} have three solutions, providing the three limit cycles of
Figure \ref{threechinesecycles}.

\begin{figure}[t]
    \begin{center}
    \psfrag{x1}{$x=1$}
   \psfig{file=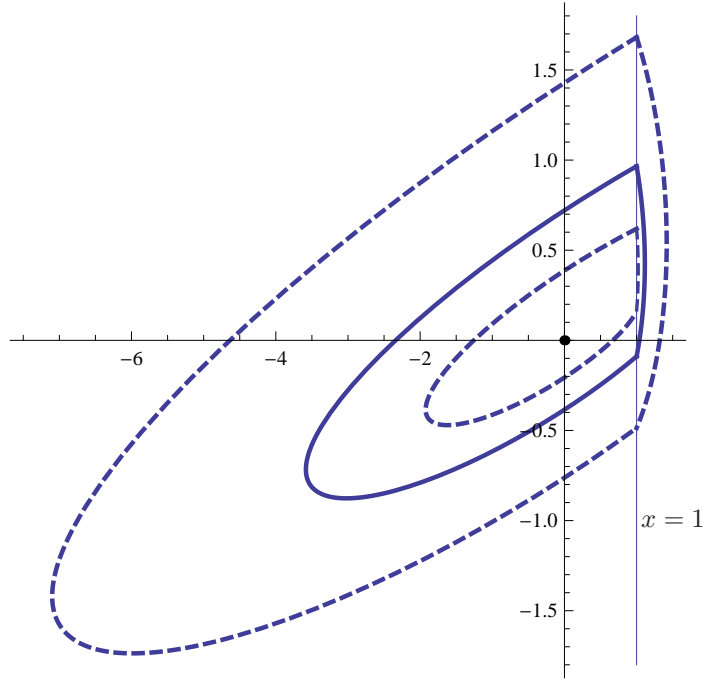,width=9cm}\\
    \end{center}
        \caption{The three limit cycles surrounding the origin.}
        \label{threechinesecycles}
\end{figure}

\section{Proofs of Propositions \ref{hiia} and \ref{hiib} and Theorems \ref{MRt1} and \ref{MRt2}}\label{s2}

\begin{proof}[Proof of Proposition \ref{hiia}]
System \eqref{MRs1} can be written as the autonomous system
\[
\begin{pmatrix}\tau'\\x'\end{pmatrix}=
\begin{cases}
X(\tau,x)\quad \textrm{if}\quad h(\tau,x)>0,\\
Y(\tau,x)\quad \textrm{if}\quad h(\tau,x)<0,
\end{cases}
\]
in $\R\times D$, where
\[
\begin{aligned}
\displaystyle
&X(\tau,x)=\begin{pmatrix}1\\\e(F_1(\tau,x)+F_2(\tau,x))+\e^2
(R_1(\tau,x,\e)+R_2(\tau,x,\e))\end{pmatrix}, \vspace{0.2cm}\\
\displaystyle
&Y(\tau,x)=\begin{pmatrix}1\\\e(F_1(\tau,x)-F_2(\tau,x))+\e^2
(R_1(\tau,x,\e)-R_2(\tau,x,\e))\end{pmatrix}.
\end{aligned}
\]
So
\begin{equation}\label{cro}
\begin{array}{rl}
(Xh)(Yh)=&\langle\nabla h,X\rangle\langle\nabla h,Y\rangle\\
=&(\p_th)^2+\e2\p_th\langle\nabla_x h,F_1\rangle\\
&+\e^2\left(2\p_th\langle\nabla_x h,R_1\rangle+\langle \nabla_xh,F_1\rangle^2-\langle \nabla_xh,F_2\rangle^2\right)\\
&+\e^32\left(\langle\nabla_xh,F_1\rangle\langle\nabla_xh,R_1\rangle-\langle\nabla_xh,F_2\rangle\langle\nabla_xh,R_2\rangle\right)\\
&+\e^4\left(\langle \nabla_xh,R_1\rangle^2-\langle \nabla_xh,R_2\rangle^2\right).
\end{array}
\end{equation}
Let $x(t,z)$ be the solution of the system \eqref{MRs1} such that $x(0,z)=z$. Fixed an open bounded subset $U\subset D_0$ we define the compact subset $K=\{(t,x(t,z)):\,(t,z)\in [0,T]\times\ov U\}\subset [0,T]\times D$.

Hence, we can choose $|\e_0|>0$ sufficiently small such that $(Xh)(Yh)(t,x)>0$ for every $(t,x)\in K\cap\Sigma$ and $\e\in(-\e_0,\e_0)$. Indeed $\left(\p_th(t,x)\right)^2$ is a continuous positive function in $\R\times D$ so there exists $\kappa_0>0$ such that $\left(\p_th(t,x)\right)^2>\kappa_0$ for every $(t,x)\in K$.
\end{proof}

\begin{proof}[Proof of Proposition \ref{hiib}]
Consider the notation of the proof of Proposition \ref{hiia}. From \eqref{cro} we also conclude that
\[
\begin{array}{CL}
(Xh)(Yh)=&\left(\p_th+\e^2\langle\nabla_x h,R_1\rangle\right)^2\\
&+2\e\left(\p_th\langle\nabla_x h,F_1\rangle+\e\dfrac{\langle \nabla_xh,F_1\rangle^2-\langle \nabla_xh,F_2\rangle^2}{2}\right)+\e^3\CO(1).\\
\end{array}
\]
Now, we note that $K\cap\Sigma$ has a finite number of connected components, since $\Sigma$ is a regular manifold in $[0,T]\times D$. So we can choose a finite subset $\{p_1,p_2,\ldots,p_m\}\subset\Sigma$ such that $\p_th(p_i)=0$ for $i=1,2,\ldots,m$, $\Sigma_{p_i}\cap\Sigma_{p_j}=\emptyset$ for $i\neq j$, and $\p_th(t,x)\neq0$ for every $(t,x)\in\Sigma\backslash \left(\Sigma_{p_1}\cup\Sigma_{p_2}\cdots\Sigma_{p_m}\right)$. Thus for $i=1,2,\ldots,m$
\[
(Xh)(Yh)(t,x)\geq\e\xi_{p_i}(t,x)+\e^3\CO(1),
\]
for every $(t,x)\in\Sigma_{p_i}$. We can choose then $\e_i>0$ sufficiently small such that $(Xh)(Yh)(t,x)>0$ for every $(t,x)\in K\cap\Sigma_{p_i}$ and $\e\in(0,\e_i)$. Indeed $\xi_{p_i}$ is a continuous positive function in $\R\times D$ so there exists $\kappa_i>0$ such that $\xi_{p_i}(t,x)>\kappa_i$ for every $(t,x)\in K$. Moreover, by the proof of Proposition \ref{hiia}, we can choose $|\e_0|>0$ sufficiently small such that $(Xh)(Yh)(t,x)>0$ for every $(t,x)\in K\cap\Sigma\backslash \left(\Sigma_{p_1}\cup\Sigma_{p_2}\cup\cdots\cup\Sigma_{p_m}\right)$ and $\e\in(-\e_0,\e_0)$.

Hence, choosing $\bar{\e}=\min\{\e_0,\e_1,\ldots,\e_m\}$ we conclude that $(Xh)(Yh)(t,x)>0$ for every $(t,x)\in K\cap\Sigma$ and $\e\in(0,\bar{\e})$
\end{proof}

%\begin{lemma}\label{MRl1}
%If $\p h/\p t\not\equiv 0$, then for $z\in D$ and $t\in[0,T]$ the
%map $h_z:t\mapsto h(t,z)$ has finitely many zeros in $[0,T]$.
%\end{lemma}

%\begin{proof}
%From the definition of $h$ we know that the map $h_z:t\mapsto
%h(t,z)$ is $C^1$. Suppose that there exist an increasing sequence
%$(t_i)_{i\in\N}\subset[0,T]$ such that $h_z(t_i)=0$ for all
%$i\in\N$. Then there exist a convergent subsequence
%$(t_{i_j})_{j\in\N}$ such that $t_{i_j}\to\bar{t}\in[0,T]$. By
%continuity of $h_z$, it follows that $h_z(\bar{t})=0$, which implies
%that $(\bar{t},z)\in \CM$. We know, by the Mean Value Theorem, that
%for each $j\in\N$ there exist $s_j\in(t_{i_j},t_{i_{j+1}})$ such
%that $h'_z(s_j)=0$. Since $s_j\to\bar{t}$, then $h'_z(\bar{t})=0$,
%which is a contradiction with hypothesis (iii). Hence, the map
%$h_z:t\mapsto h(x,z)$, for $t\in[0,T]$, vanishes only in a finite
%subset $\CT\subset[0,T]$.
%\end{proof}

For proving Theorems \ref{MRt1} and \ref{MRt2} we need some preliminary lemmas. %In what follows we always assume that all hypotheses of Theorem \ref{MRt1} hold.
As usual $\mu$ denotes the {\it Lebesgue Measure}.

\smallskip

The hypotheses $(ii)$ and $(ii')$ of Theorem \ref{MRt1} and \ref{MRt2} respectively make assumptions on the Brouwer degree of the averaged function $f$. So we need to show that the function $f$ is continuous in order that the Brouwer degree will be well defined, for more details see Appendix A.

\begin{lemma}\label{MRl2}
The averaged function \eqref{MRf1} is continuous in $C$.
\end{lemma}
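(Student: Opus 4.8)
The plan is to show directly that the averaged function
\[
f(x)=\int_0^T F(t,x)\,dt = \int_0^T\Big(F_1(t,x)+\sgn(h(t,x))F_2(t,x)\Big)\,dt
\]
is continuous on $C$. The only potential source of discontinuity is the factor $\sgn(h(t,x))$, since $F_1$, $F_2$ are continuous and the integration in $t$ would otherwise smooth things out. So first I would isolate this term, writing $f(x)=\int_0^T F_1(t,x)\,dt + \int_0^T \sgn(h(t,x))F_2(t,x)\,dt$; continuity of the first integral is immediate from continuity of $F_1$ and compactness of $[0,T]$, so everything reduces to the second integral, call it $g(x)$.

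Next I would fix $x_0\in C\subset D_0$ and a sequence (or net) $x_k\to x_0$ in $C$, and aim to apply the dominated convergence theorem to $g(x_k)\to g(x_0)$. The dominating function is clear: $|F_2|$ is bounded on the compact set $\{(t,x):t\in[0,T],\ x\in \overline{U}\}$ for a small closed ball $\overline U\subset C$ around $x_0$, so $|\sgn(h(t,x_k))F_2(t,x_k)|$ is uniformly bounded. The heart of the matter is the pointwise (a.e.\ in $t$) convergence of the integrand. For a fixed $t$, if $h(t,x_0)\neq 0$ then by continuity of $h$ we get $h(t,x_k)$ of the same sign for $k$ large, hence $\sgn(h(t,x_k))\to \sgn(h(t,x_0))$ and $F_2(t,x_k)\to F_2(t,x_0)$, giving convergence of the integrand at $t$. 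The only bad $t$'s are those with $h(t,x_0)=0$, i.e.\ the zero set $Z_{x_0}=\{t\in[0,T]:h_{x_0}(t)=0\}$. Here is where the hypothesis $x_0\in D_0$ enters decisively: by definition of $D_0$ the map $h_{x_0}:t\mapsto h(t,x_0)$ has only isolated zeros in $[0,T]$, so $Z_{x_0}$ is finite, hence of Lebesgue measure zero. Therefore the integrand converges for a.e.\ $t\in[0,T]$, and dominated convergence yields $g(x_k)\to g(x_0)$.

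I expect the main (though modest) obstacle to be handling the set $D_0$ correctly: one must note that the definition of $D_0$ is exactly what forces $\mu(Z_{x_0})=0$, and that this is the reason $D_0$ appears in the hypotheses of Theorem \ref{MRt2} and implicitly (via hypothesis $(ii)$ and Proposition \ref{hiia}) in Theorem \ref{MRt1}. One should also be slightly careful that "isolated zeros" on the compact interval $[0,T]$ indeed gives only finitely many of them (an infinite set of isolated points in a compact set would have an accumulation point, contradicting isolatedness), so the zero set is automatically finite, not merely countable; either way its measure is zero, which is all that is needed. A clean way to organize the write-up is: (1) split $f=\int F_1 + g$; (2) reduce to continuity of $g$ at an arbitrary $x_0\in C$; (3) use $x_0\in D_0$ to conclude $\mu(\{t: h(t,x_0)=0\})=0$; (4) verify a.e.\ pointwise convergence of the integrand along any sequence $x_k\to x_0$; (5) apply the dominated convergence theorem with dominating function $\sup_{\overline U\times[0,T]}|F_2|$. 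No calculation beyond these observations is required.
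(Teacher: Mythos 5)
Your argument is correct, and it reaches the same crucial observation as the paper (the only obstruction is the sign factor, and the bad set $\{t:h(t,x_0)=0\}$ has Lebesgue measure zero because the zeros of $h_{x_0}$ are isolated on the compact interval $[0,T]$), but the machinery is different. The paper does not use dominated convergence: it fixes $z_0$, splits $[0,T]$ into the sets $A^{\pm}_{z_0}\cap A^{\pm}_{z}$ where the signs of $h(\cdot,z_0)$ and $h(\cdot,z)$ agree or disagree, and produces a direct quantitative estimate
$|f(z)-f(z_0)|\leq CT|z-z_0| + M\bigl(\mu(A^+_{z_0}\cap A^-_z)+\mu(A^-_{z_0}\cap A^+_z)\bigr)$,
using the Lipschitz constants of $F_1,F_2$ for the matched-sign part and the bound $M$ on $F_2$ for the mismatched part, and then observes that the measures of the mismatch sets tend to $0$ as $z\to z_0$ (which is itself the measure-zero-of-$A^0_{z_0}$ fact in disguise). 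Your sequential dominated-convergence argument buys a slightly cleaner and more economical proof: it needs only continuity of $F_1,F_2$, not hypothesis $(i)$, and it absorbs the ``mismatch measure tends to zero'' step, which the paper leaves as ``easy to see,'' into the a.e.\ pointwise convergence of the integrand. The paper's estimate, on the other hand, is explicit and quantifies the modulus of continuity in terms of $|z-z_0|$ and the mismatch measures.

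One point to state more carefully in a write-up: the lemma must also serve Theorem \ref{MRt1}, where $C$ is only assumed to lie in $D$, not in $D_0$. You cannot simply write $x_0\in C\subset D_0$; as the paper does, you should first argue that hypothesis $(ii)$ (the crossing hypothesis, applied to the constant solutions $t\mapsto z$ of the $\e=0$ system) forces $h_{z}$ to have only isolated zeros for every $z\in C$, and only then invoke $\mu(\{t:h(t,x_0)=0\})=0$; under the hypotheses of Theorem \ref{MRt2} this step is immediate from $C\subset D_0$. You gesture at this, but it belongs in the proof, not in a side remark.
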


\begin{proof}
First of all we note that either the hypothesis $(ii)$ Theorem \ref{MRt1} or $(ii')$ of Theorem \ref{MRt2} implies that the map $h_{z}:t\mapsto h(t,z)$ has only isolated zeros for $z\in C$. Because the constant function $t\mapsto z$ is the solution of system \eqref{MRs1} for $\e=0$, thus, from hypothesis $(ii)$ of Theorem \ref{MRt1}, it reaches the set of discontinuity only at its crossing region. From hypothesis $(ii')$ of Theorem \ref{MRt2} this conclusion is immediately.

For $z\in C$ we define the sets $A^+_z=\{t\in[0,T]:\, h(t,z)>0\}$, $A^-_z=\{t\in[0,T]:\, h(t,z)<0\}$, and $A^0_z=\{t\in[0,T]:\, h(t,z)=0\}$. We note that $\mu\left(A^0(z)\right)=0$, since the map $h_{z}:t\mapsto h(t,z)$ has only isolated zeros for $z\in C$. Moreover $[0,T]=\ov {A^+(z)\cup A^-(z)}$.

Now, fix $z_0\in C$, for $z\in C$ in some
neighborhood of $z_0$, we estimate
\[
\begin{array}{RCL}
|f(z)-f(z_0)|&\leq&\int_0^T|F_1(t,z_0)-F_1(t,z)|dt\\
&&+\int_0^t|\sgn(h(t,z_0))F_2(t,z_0)-\sgn(h(t,z))F_2(t,z)|dt\\
&\leq& TL|z_0-z|\\
&&+\int_{A^+_{z_0}\cap A^+_z}\?|F_2(t,z_0)-F_2(t,z)|dt+\int_{A^-_{z_0}\cap A^-_z}\?|F_2(t,z_0)-F_2(t,z)|dt\\
&&+\int_{A^+_{z_0}\cap A^-_z}\?|F_2(t,z_0)+F_2(t,z)|dt+\int_{A^-_{z_0}\cap A^+_z}\?|F_2(t,z_0)+F_2(t,z)|dt\\
&\leq& 3T|z_0-z|+\left(\mu\left(A^+_{z_0}\cap A^-_z\right)+\mu\left(A^-_{z_0}\cap A^+_z\right)\right)M,
\end{array}
\]
where $L$ is the Lipschitz constant of $F_1$ and $M=\max\{F_2(t,z):\,(t,z)\in[0,T]\times \ov U\}<\infty$. It is easy to see that $\mu\left(A^+_{z_0}\cap A^-_z\right)\to 0$ and $\mu\left(A^-_{z_0}\cap A^+_z\right)\to 0$ when $z\to z_0$. So the lemma is proved.
\end{proof}

Let $a$ be the point in hypothesis $(iii)$ of Theorems \ref{MRt1} and \ref{MRt2}. By
Lemma \ref{MRl2} there exists a neighborhood $U\subset C$ of $a$ such that
$f$ is continuous in $U$. Hence, by Theorems \ref{ApAt1} and
\ref{ApAt2} (see Appendix A), there exists a unique map that
satisfies the properties of the Brouwer degree for the function
$f(z)$ with $z\in \overline{U}$, because $0\notin f(\partial U)$.
This map is denoted by $d_B(f,U,0)$.

\begin{lemma}\label{MRl4}
For $|\e|>0$ (or $\e>0$) sufficiently small the solutions of system \eqref{MRs1} (in the sense of Filippov) starting in $C$ are uniquely defined.
\end{lemma}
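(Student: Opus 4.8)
The plan is to show that for $|\e|$ (resp. $\e$) small, every solution of \eqref{MRs1} starting in $C$ meets $\Sigma$ only in its crossing region, so that Filippov's convention produces a unique trajectory that simply passes through $\Sigma$. The key observation is that $\Sigma$ has no sliding or escaping subset along these orbits precisely when the product $(Xh)(Yh)$ is positive on the relevant piece of $\Sigma$, where $X$, $Y$ are the autonomous vector fields introduced in the proof of Proposition \ref{hiia}. This product is exactly the quantity analyzed in \eqref{cro}.

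First I would recall that hypothesis $(ii)$ of Theorem \ref{MRt1} (resp. $(ii')$ of Theorem \ref{MRt2}), together with Proposition \ref{hiia} (resp. Proposition \ref{hiib}), guarantees that for $|\e|$ (resp. $\e$) sufficiently small and for every $z$ in the open bounded set $C$, the compact set $K=\{(t,x(t,z)):(t,z)\in[0,T]\times\ov U\}$ satisfies $(Xh)(Yh)(t,x)>0$ for all $(t,x)\in K\cap\Sigma$. In particular $\Sigma$ is crossed transversally along every such orbit, so there is no tangency, no sliding motion, and no escaping motion along these solutions. Next I would invoke the standard fact (Filippov's theory) that through a crossing point of $\Sigma$ the Filippov solution is unique: it is the concatenation of the solution of $X$ on the side $h>0$ and the solution of $Y$ on the side $h<0$, and at the crossing point the trajectory is simply continued from one side to the other. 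Since every orbit starting in $C$ only meets $\Sigma$ at crossing points, there is no branching, and uniqueness holds globally on $[0,T]$.

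The remaining point is to handle the concatenation rigorously: one should check that the crossing times form a discrete set along each orbit so that the solution is obtained by gluing finitely (or at worst countably, but in fact finitely on the compact time interval $[0,T]$) many smooth arcs. This follows because $t\mapsto h(t,x(t,z))$ has isolated zeros: indeed the transversality $(Xh)(Yh)>0$ means $\frac{d}{dt}h(t,x(t,z))\neq 0$ at each such zero, hence zeros are isolated, hence finite in number on $[0,T]$ by compactness. On each complementary subinterval the orbit lies strictly in one zone, where $X$ (or $Y$) is $\CC^0$ and locally Lipschitz in $x$ by hypothesis $(i)$, so the classical Picard--Lindel\"of theorem gives uniqueness of the arc; matching the arcs at the (finitely many) crossing instants and using continuity yields a single well-defined Filippov solution.

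The main obstacle is purely bookkeeping: one must make sure that the smallness of $\e$ can be chosen uniformly over all $z\in C$, which is exactly why the compact set $K$ (constructed from the flow over $[0,T]\times\ov U$) is used rather than working pointwise, and one must guard against the orbit leaving $D$ or re-entering $C$ in a way that escapes the estimate — but since $C$ is bounded and the perturbation is $O(\e)$, for $|\e|$ small the orbit stays in a fixed compact neighborhood on $[0,T]$, so the uniform choice of $\e$ in Propositions \ref{hiia} and \ref{hiib} applies. Once uniform transversality on $K\cap\Sigma$ is in hand, no genuine difficulty remains; the conclusion is immediate from Filippov's uniqueness at crossing points.
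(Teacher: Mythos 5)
Your proof is correct and follows essentially the same route as the paper: hypothesis $(ii)$ (resp.\ $(ii')$) ensures orbits starting in $C$ meet $\Sigma$ only at crossing points, and Filippov's uniqueness result at points where $(Xh)(Yh)>0$ (the paper's Proposition \ref{MRp1}) then gives uniqueness; your extra discussion of isolated crossing times, Picard--Lindel\"of on each arc, and uniformity of $\e$ is just a more detailed write-up of the same argument.
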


To prove Lemma \ref{MRl4} we will need the following proposition,
that has been proved in Corollary 1 of section 10 of chapter 1 of
\cite{F}. Define
\[
\begin{array}{L}
S^+=\{(t,x)\in\R\times D:\,h(t,x)>0\},\\
S^-=\{(t,x)\in\R\times D:\,h(t,x)<0\}.
\end{array}
\]
Note that $\R\times D=S^-\cup \Sigma\cup S^+$.

\begin{proposition}\label{MRp1}
For every point of the manifold $\Sigma$ where $(Xh)(Yh)>0$, there is a
unique solution passing either from $S^-$ into $S^+$, or from  $S^+$
into $S^-$.
\end{proposition}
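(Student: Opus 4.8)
The plan is to argue directly from the Filippov differential inclusion, exploiting the fact that at a crossing point every admissible velocity pushes $h$ strictly in one direction, so that no sliding and no lingering on $\Sigma$ can occur. Let $p\in\Sigma$ with $(Xh)(Yh)(p)>0$. Since this forces $Xh(p)$ and $Yh(p)$ to be nonzero with the same sign, I may assume $Xh(p)>0$ and $Yh(p)>0$; the case of two negative signs is entirely symmetric and yields a crossing from $S^+$ into $S^-$ instead of from $S^-$ into $S^+$. Because $X$, $Y$ are continuous and $h$ is $\CC^1$, the Lie derivatives $Xh=\langle\nabla h,X\rangle$ and $Yh=\langle\nabla h,Y\rangle$ are continuous, so I can fix a neighborhood $V$ of $p$ and a constant $c>0$ with $Xh>c$ and $Yh>c$ on $V$.

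The key step is to control the sign of $h$ along any Filippov solution. At each $q\in V$ the admissible velocity set $\CF(q)$ is $\{X(q)\}$ on $S^+$, $\{Y(q)\}$ on $S^-$, and the segment $\{\la X(q)+(1-\la)Y(q):\la\in[0,1]\}$ on $\Sigma$. Taking the Lie derivative, every admissible velocity $v\in\CF(q)$ at $q\in V$ satisfies $\langle\nabla h(q),v\rangle\geq c$, since on $\Sigma$ one has $\langle\nabla h,\la X+(1-\la)Y\rangle=\la\,Xh+(1-\la)Yh>c$. Thus if $x(t)$ is any absolutely continuous Filippov solution with $x(0)=p$, the chain rule (valid a.e.\ since $h\in\CC^1$ and $x$ is absolutely continuous) gives $\frac{d}{dt}h(x(t))\geq c$ a.e.\ while $x(t)\in V$. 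Consequently $h(x(t_2))-h(x(t_1))\geq c(t_2-t_1)$ for $t_1<t_2$, so $h(x(t))<0$ for small $t<0$ and $h(x(t))>0$ for small $t>0$: the solution lies in $S^-$ to the left of $0$ and in $S^+$ to the right, and the sojourn time on $\Sigma$ is zero.

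With this dichotomy in hand, existence and uniqueness both follow. For existence I would concatenate the two smooth branches: solve $\dot x=Y(x)$, $x(0)=p$ for $t\leq 0$ and $\dot x=X(x)$, $x(0)=p$ for $t\geq 0$; the monotonicity $\frac{d}{dt}h>c$ of each branch (again from $Yh,Xh>c$) guarantees that the backward $Y$--orbit stays in $S^-$ and the forward $X$--orbit in $S^+$, so the concatenation is an absolutely continuous curve satisfying the inclusion and crossing from $S^-$ into $S^+$. For uniqueness, the dichotomy shows that any solution must coincide on $(t_-,0)$ with the $Y$--orbit through $p$ and on $(0,t_+)$ with the $X$--orbit through $p$; since $X$ and $Y$ are locally Lipschitz (available here through hypothesis $(i)$), Picard--Lindel\"of makes each branch unique, and hence the whole crossing solution is unique.

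The main obstacle I expect is making the ``no sliding, no lingering'' part rigorous rather than merely intuitive: one must pass carefully from the differential inclusion to the scalar inequality $\frac{d}{dt}h(x(t))\geq c$ for \emph{every} measurable selection $\dot x(t)\in\CF(x(t))$, and then use it to conclude that the zero set $\{t:x(t)\in\Sigma\}$ reduces to the single instant $t=0$. Once the strict monotonicity of $t\mapsto h(x(t))$ is secured, the reduction to two ordinary Cauchy problems governed by the Lipschitz fields $X$ and $Y$ is routine, and this is precisely the content recorded in Filippov \cite{F}.
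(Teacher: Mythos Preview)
The paper does not give its own proof of this proposition; it is quoted from Filippov's monograph (Corollary~1 of \S10, Chapter~1 of~\cite{F}). Your argument supplies what the paper leaves to the reference and follows precisely the standard line in Filippov theory: bound $\langle\nabla h,v\rangle$ uniformly from below over the entire set-valued right-hand side near $p$, integrate to obtain strict monotonicity of $t\mapsto h(x(t))$ for every Filippov solution, conclude that any such solution spends zero time on $\Sigma$ and lies in $S^-$ for $t<0$, $S^+$ for $t>0$, and then reduce to two classical Cauchy problems on either side. The only point worth flagging is that your uniqueness step appeals to Picard--Lindel\"of and hence to the local Lipschitz property of $X$ and $Y$; this is not contained in the bare statement of the proposition, but it is available in the paper via hypothesis~$(i)$ of Theorem~A, so the argument is entirely adequate for the intended application to Lemma~\ref{MRl4}.
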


\begin{proof}[Proof of Lemma \ref{MRl4}]
The proof follows immediately from Proposition \ref{MRp1} and hypothesis $(ii)$.
\end{proof}

Instead of working with the discontinuous differential system
\eqref{MRs1} we shall work with the continuous differential system
\begin{equation}\label{MRs1a}
x'(t)=\e F_{\de}(t,x)+\e^2R_{\de}(t,x,\e),
\end{equation}
where
\[
\begin{aligned}
&F_{\de}(t,x)=F_1(t,x)+\phi_{\de}(h(t,x))F_2(t,x),\\
&R_{\de}(t,x,\e)=R_1(t,x,\e)+\phi_{\de}(h(t,x))R_2(t,x,\e),
\end{aligned}
\]
and $\phi_{\de}:\R\rightarrow\R$ is the continuous function defined
in \eqref{PKf1} and \eqref{e1}, and satisfying \eqref{e2}.

\smallskip

For system \eqref{MRs1a} the averaged function is defined as
\begin{equation*}\label{MRf1a}
f_{\de}(z)=\int_0^T F_{\de}(t,x) dt.
\end{equation*}

We need to guarantee that hypothesis $(i)$ of Theorem \ref{ApBt1} (see
appendix A) holds for the functions $F_{\de}$ and $R_{\de}$. For
this purpose we prove the following lemma.

\begin{lemma}\label{PKr1}
For $\de\in (0,1]$ the function $\phi_{\de}: \R \to \R$ defined in
\eqref{PKf1}  with $\phi$ given by \eqref{e1} is globally
$1/\de$--Lipschitz; i.e. for all $u_1, u_2\in \R$ we have that
$|\phi_{\de}(u_1)- \phi_{\de}(u_2)|\leq (1/\de)|u_1-u_2|$.
\end{lemma}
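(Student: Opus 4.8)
The plan is to prove the Lipschitz bound directly from the explicit piecewise formula for $\phi_\de$. Since $\phi$ is given by \eqref{e1}, the function $\phi_\de(u)=\phi(u/\de)$ is the piecewise linear function equal to $-1$ for $u\le-\de$, equal to $u/\de$ for $-\de<u<\de$, and equal to $1$ for $u\ge\de$. First I would observe that $\phi_\de$ is continuous on $\R$ and differentiable everywhere except at $u=\pm\de$, with derivative equal to $0$ on $(-\infty,-\de)\cup(\de,\infty)$ and equal to $1/\de$ on $(-\de,\de)$. Hence $|\phi_\de'(u)|\le 1/\de$ wherever the derivative exists, and by the mean value theorem (applied on each of the at most three subintervals determined by $u_1<u_2$ and the breakpoints $\pm\de$, then summing) we get $|\phi_\de(u_1)-\phi_\de(u_2)|\le (1/\de)|u_1-u_2|$ for all $u_1,u_2\in\R$.

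Alternatively, and perhaps more cleanly for the write-up, I would avoid differentiability entirely and argue by cases on where $u_1$ and $u_2$ lie relative to the interval $[-\de,\de]$. If both lie in $[-\de,\de]$ then $|\phi_\de(u_1)-\phi_\de(u_2)|=|u_1-u_2|/\de$, giving equality. If both lie in $[\de,\infty)$ or both in $(-\infty,-\de]$, the left side is $0$. The remaining cases involve one point inside and one point outside, or the two points on opposite sides of the slab; in each such case one replaces the outside point(s) by the nearer endpoint $\pm\de$ without increasing the left side while decreasing the right side, reducing to the case already handled. This monotone-projection observation is the one small idea needed.

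There is really no serious obstacle here: the only point requiring a moment's care is handling the non-differentiability at the breakpoints $u=\pm\de$, which is dispatched either by the subinterval decomposition in the mean value theorem argument or by the case analysis above. I expect the case-analysis version to be the shortest to write and least error-prone, so that is the route I would take in the final proof.
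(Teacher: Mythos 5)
Your proposal is correct and follows essentially the same route as the paper: a direct case analysis on where $u_1,u_2$ lie relative to the slab $[-\de,\de]$, with the mixed cases handled by the explicit piecewise formula (your projection-to-the-endpoint remark is just a tidier packaging of the paper's estimate $|\phi_{\de}(u_1)-\phi_{\de}(u_2)|=2\leq(1/\de)|u_1-u_2|$ when $u_1\leq-\de<\de\leq u_2$). No gaps; the mean value theorem variant you mention is also fine but unnecessary.
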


\begin{proof}
If $u_1\leq -\de< \de\leq u_2$, then
$|\phi_{\de}(u_1)-\phi_{\de}(u_2)|= 2 = (1/\de) 2\de\leq
(1/\de)|u_1-u_2|$.

\smallskip

If $u_1, u_2\leq -\de$ or  $u_1, u_2\geq \de$, then
$|\phi_{\de}(u_1)-\phi_{\de}(u_2)|= 0 \leq (1/\de)|u_1-u_2|$.

\smallskip

Assume that $u_1\in(-\de,\de)$. If $|u_2|<\de$, then
$|\phi_{\de}(u_1)-\phi_{\de}(u_2)|=(1/\de)|u_1-u_2|$; and if
$|u_2|\geq\de$, then $|\phi_{\de}(u_1)-\phi_{\de}(u_2)|\leq
\max\{|1/\de|,|1/u_2|\} |u_1-u_2|\leq(1/\de)|u_1-u_2|$. This
completes the proof of the lemma.
\end{proof}

\begin{proposition}\label{MRl3}
For $\de\in (0,1]$ the functions $F_{\de}$ and $R_{\de}$ are locally
Lipschitz with respect to the variable $x$.
\end{proposition}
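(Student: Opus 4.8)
The plan is to reduce the statement to two facts: that sums, products, and compositions of locally Lipschitz functions are locally Lipschitz, and that the transition function $\phi_\de$ is globally Lipschitz by Lemma \ref{PKr1}. First I would fix $\de\in(0,1]$ and an arbitrary point $x_0\in D$, and choose a compact neighbourhood $N\subset D$ of $x_0$ on which all of $F_1,F_2,R_1,R_2$ and $h$ are Lipschitz in $x$ (uniformly in $t\in[0,T]$ by periodicity and continuity) with some common constant $L$; since $[0,T]\times N$ is compact, these functions are also bounded there, say by a constant $M$. This is the routine ``localize and bound'' setup.

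Next I would estimate $|F_\de(t,x)-F_\de(t,y)|$ for $x,y\in N$ by adding and subtracting the cross term $\phi_\de(h(t,x))F_2(t,y)$:
\[
|F_\de(t,x)-F_\de(t,y)|\leq |F_1(t,x)-F_1(t,y)|+|\phi_\de(h(t,x))|\,|F_2(t,x)-F_2(t,y)|
\]
\[
\qquad\qquad +\,|\phi_\de(h(t,x))-\phi_\de(h(t,y))|\,|F_2(t,y)|.
\]
Since $|\phi_\de|\leq 1$ everywhere, the first two terms are bounded by $2L|x-y|$. For the third term, Lemma \ref{PKr1} gives $|\phi_\de(h(t,x))-\phi_\de(h(t,y))|\leq (1/\de)|h(t,x)-h(t,y)|\leq (L/\de)|x-y|$, and $|F_2(t,y)|\leq M$, so the third term is bounded by $(LM/\de)|x-y|$. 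Hence $F_\de$ is Lipschitz on $[0,T]\times N$ with constant $2L+LM/\de$. The same computation verbatim, with $R_1,R_2$ in place of $F_1,F_2$ (and noting the extra parameter $\e$ ranges over the compact interval $[-\e_0,\e_0]$, on which one again has uniform bounds and Lipschitz constants), shows $R_\de$ is locally Lipschitz in $x$ as well.

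Since $x_0\in D$ was arbitrary, this proves the proposition. There is essentially no obstacle here: the only nontrivial input is the global Lipschitz bound on $\phi_\de$, which is exactly Lemma \ref{PKr1}, and the only subtlety is being careful that the factor multiplying the difference $\phi_\de(h(t,x))-\phi_\de(h(t,y))$ is the \emph{bounded} function $F_2$ (or $R_2$), not the other way around, so that the Lipschitz constant $1/\de$ of $\phi_\de$ can be used directly. I would keep the write-up to a few lines, since the argument is a standard product-and-composition rule for locally Lipschitz maps.
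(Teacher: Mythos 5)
Your argument is correct and is essentially identical to the paper's proof: the same add-and-subtract splitting of the product $\phi_\de(h)F_2$, the same use of Lemma \ref{PKr1} together with $|\phi_\de|\leq 1$ and the compactness bound $M$ on $F_2$, yielding the same local Lipschitz constant $2L+ML/\de$, with $R_\de$ handled analogously.
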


\begin{proof}
Let $K\subset D$ be a compact subset. Denote $M=\sup\{|F_2(t,x)|:
(t,x)\in[0,T]\times K\}$, which is well defined by continuity of the
function $(t,x)\mapsto|F_2(t,x)|$ and compactness of the set
$[0,T]\times K$. For $x_1$ and $x_2$ in $K$ where $F_1$ and $h$ are
locally Lipschitz and by Lemma \ref{PKr1}, we have
\[
\begin{array}{RCL}
|F_{\de}(t,x_1)-F_{\de}(t,x_2)|&=&|F_1(t,x_1)-F_1(t,x_2)\\
&&+\phi_{\de}(h(t,x_1))F_2(t,x_1)-\phi_{\de}(h(t,x_2))F_2(t,x_2)|\\
&\leq&|F_1(t,x_1)-F_1(t,x_2)|\\
&&+|\phi_{\de}(h(t,x_1))F_2(t,x_1)-\phi_{\de}(h(t,x_2))F_2(t,x_2)|\\
&\leq& L|x_1-x_1|+|\phi_{\de}(h(t,x_1))||F_2(t,x_1)-F_2(t,x_2)|\\
&&+|F_2(t,x_2)||\phi_{\de}(h(t,x_1))-\phi_{\de}(h(t,x_2))|\\
&\leq& 2L|x_1-x_2|+\dfrac{M}{\de}|h(t,x_1)-h(t,x_2)|\\
&\leq&\left(2L+\dfrac{ML}{\de}\right)|x_1-x_2|=L_{\de}|x_1-x_2|.
\end{array}
\]
Here $L$ is the maximum between the Lipschitz constant of the functions $F_1$ and $F_2$.

The proof for $R_{\de}$ is analogous.
\end{proof}

Now we are ready to prove Theorems \ref{MRt1} and \ref{MRt2}. We shall prove only theorem \ref{MRt1}. The proof of Theorem \ref{MRt2} is completely analogous.

\begin{proof}[Proof of Theorem \ref{MRt1}]
We will study the Poincar\'{e} maps for the discontinuous differential
system \eqref{MRs1} and for the continuous differential system
\eqref{MRs1a}. For each $z\in C$, let $x(t,z,\e)$ denote the
solution (in the sense of Filippov) of system \eqref{MRs1} such that $x(0,z,\e)=z$; and let $x_{\de}(t,z,\e)$ denote the
solution of system \eqref{MRs1a} such that $x_{\de}(0,z,\e)=z$.
Since all solutions starting in $C$ reaches the set of discontinuity at its crossing region for $|\e|>0$ (or $\e>0$) sufficiently small, it follows that $x_{\de}(t,z,\e)\to x(t,z,\e)$ when $\de\to 0$ for every $(t,z)\in[0,T]\times C$ and for $|\e|>0$ (or $\e>0$) sufficiently small.

\smallskip

Since the differential system \eqref{MRs1a} is $T$--periodic in the
variable $t$, we can consider system \eqref{MRs1a} as a differential
system defined on the generalized cylinder $\mathbb{S}^1\times D$
obtained by identifying $\Sigma=\{(\tau,x):\tau=0\}$ with
$\{(\tau,x):\tau=T\}$, see Figure \ref{MRfig1}. On this cylinder
$\Sigma$ is a section for the flow. Moreover, if $z\in C$ is the
coordinate of a point on $\Sigma$, then we consider the Poincar\'{e} map
$P^{\e}_{\de}(z)=x_{\de}(T,z,\e)$ for the points $z$ such that
$x_{\de}(T,z,\e)$ is defined.

\smallskip

Observe that there exists $\e_0>0$ such that, whenever
$\e\in[-\e_0,\e_0]$, the solution $x_{\de}(t,z,\e)$ is uniquely
defined on the interval $[0,T]$. Indeed, if $(t_z^-,t_z^+)$ is the
maximal open interval for which the solution passing through $(0,z)$
is defined. Now we shall apply the local existence and uniqueness
theorem for the solutions of these differential, see for example
Theorem 1.2.2 of \cite{SV}. Note that we can apply that theorem due
to the result of Proposition \ref{MRl3}. Hence, by the local
existence and uniqueness theorem we have that $t_z^+>h_z$ and
$h_z=\inf\{T,d\backslash m(\e)\}$ where $m(\e)\geq|\e
F_{\de}(t,x)+\e^2R_{\de}(t,x,\e)|$ for all $t\in[0,T]$, for each $x$
with $|x-z|\leq d$ and for every $z\in C$. When $|\e|>0$ (or $\e>0$) is
sufficiently small, $m(\e)$ can be arbitrarily large, in such a way
that $h_z=T$ for all $z\in C$. Hence, for $\e\in[-\e_0,\e_0]$, the
Poincar\'{e} map of system \eqref{MRs1a} is well defined and continuous
for every $z\in C$.

\smallskip

\begin{figure}
\psfrag{A}{$t$}
\psfrag{B}{$\Sigma$}
\psfrag{C}{$\R^n$}
\psfrag{D}{$_{nT\equiv 0}$}
\includegraphics[width=4cm]{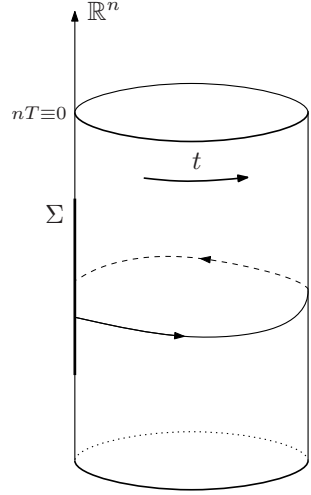}
\vskip 0cm \centerline{} \caption{\small \label{MRfig1} Generalized
cylinder.}
\end{figure}

From the definition of the Poincar\'{e} map $P^{\e}_{\de}(z)$ its
fixed points correspond to periodic orbits of period $T$ of the
differential system \eqref{MRs1a} defined on the cylinder.

\smallskip

We can define in a similar way the Poincar\'{e} map $P^{\e}(z)=x(T,z,\e)$ of the
discontinuous differential system \eqref{MRs1}. The referred Poincar\'{e} map is the composition of the Poincar\'{e} maps of
the continuous differential systems, so for $|\e|>0$ (or $\e>0$) sufficiently small it is well defined and continuous
for every $z\in C$. Again the fixed points of $P^{\e}(z)$ correspond to periodic orbits of the
discontinuous differential system \eqref{MRs1}.

\smallskip

Clearly (from above considerations), for $z\in C$ and for $|\e|>0$ (or $\e>0$) sufficiently small, the pointwise limit of the Poincar\'{e} map
$P^{\e}_{\de}(z)$ of system \eqref{MRs1a}, when $\de\to 0$ is the Poincar\'{e} map $P^{\e}(z)$  of system \eqref{MRs1}.

\smallskip

By definition the continuous differential system \eqref{MRs1a} is
$\CC^2$ in the variable $\e$. So we do the Taylor expansion of the
Poincar\'{e} map of system \eqref{MRs1a} around $\e$ up to order two,
and we get
\begin{equation}\label{e3}
P^{\e}_{\de}(z)=z+ \e f_{\de}(z)+\CO(\e^2),
\end{equation}
where $f_{\de}(z)$ is the averaged function of the continuous
differential system \eqref{MRs1a}, for more details see for instance
\cite{BL}. Due to \eqref{e2} we obtain that the pointwise limit in
$\Sigma$ of the function $f_{\de}$, when $\de\to 0$ is the function
$f$.

\smallskip

Let $a\in U$ be the point satisfying hypotheses $(ii)$ of Theorem
\ref{MRt1}. Therefore $f(z)\neq 0$ for all $z\in\overline{V}
\backslash\{a\}$. Define $f_0=f|_{\ov V}$, we know that $f_0$ is
continuous by Lemma \ref{MRl2}. Then, we consider the continuous
homotopy $\{f_{\de}|_{\ov V}, 0\leq \de \leq 1\}$. We claim that
there exists a $\de_0\in (0,1]$ such that $0\notin f_{\de}(\partial
V)$ for all $\de\in[0,\de_0]$. Now we shall prove the claim.

\smallskip

As usual $\N$ denotes the set of positive integers. Suppose that
there exists a sequence $(z_m)_{m\in \N}$ in $\partial U$ such that
$f_{\frac{1}{m}}(z_m)=0$. As the sequence $(z_m)$ is contained in
the compact set $\partial U$, so there exists a subsequence
$(z_{m_\ell})_{\ell\in\N}$ such $z_{m_\ell}\to z_0\in\partial U$.
Consequently we obtain that $f(z_0)=0$, in contradiction with the
hypotheses $(ii)$ of Theorem \ref{MRt1}. Hence, the claim is proved.

\smallskip

{From} the above claim and the property $(iii)$ of Theorem \ref{ApAt1}
(see Appendix A) we conclude that $d_B(f_{\de},V,0)\neq 0$ for
$0\leq \de\leq\de_0$. Therefore, by the property $(i)$ of Theorem
\ref{ApAt1} we obtain that $0\in f_\de(V)$, so there exists
$a_{\de}\in U$ such that $f_{\de}(a_{\de})=0$. Since, by continuity,
there exists the $\lim_{\de\to 0} a_\de$ and it is a zero of the
function $f_0=f|_U$. This limit is the point $a$ of the hypotheses
$(ii)$ of Theorem \ref{MRt1}, because $a$ is the unique zero of $f_0$
in $U$.

\smallskip

In summary, in order that for every $\de\in (0,\de_0]$ the averaged
function $f_\de$ satisfy the assumptions $(ii)$ of Theorem \ref{ApBt1}
(see Appendix B). So it only remains to show that in $U$ we have
that $f_\de(z)\neq 0$ for all $z\in \ov V\setminus \{a_\de\}$. But
this can be achieved in a complete similar way as we proved the
above claim. Hence, by Proposition \ref{MRl3} for every $\de\in
(0,\de_0]$ the continuous differential system \eqref{MRs1a}
satisfies all the assumptions of Theorem \ref{ApBt1}. Hence, for
$|\e|$ sufficiently small there exists a periodic solution
$x_{\de}(t,\e)$ of the continuous differential system \eqref{MRs1a}
such that $z_{(\de,\e)}:= x_{\de}(0,\e)\to a_{\de}$ when $\e\to 0$.

\smallskip

Now, from \eqref{e3} the point $z_{(\de,\e)}$ is a fixed point of
the Poincar\'{e} map $P^{\e}_{\de}(z)$, i.e. $P^{\e}_{\de}(z_{(\de,
\e)})= z_{(\de,\e)}$.  Since $\lim_{\de\to 0} P^{\e}_{\de}(z)=
P^{\e}(z)$, it follows that $z_\e= \lim_{\de\to 0} z_{(\de,\e)}$ is
a fixed point of the Poincar\'{e} map $P^{\e}(z)$. So, the discontinuous
differential system \eqref{MRs1} has a periodic solution $x(t,\e)$
such that $z_\e=x(0,\e)\to a$ as $\e\to 0$. Therefore the theorem is
proved.
\end{proof}

\section{Application}

In this section we shall prove Theorem \ref{t1}, by applying Theorem
\ref{MRt1} to the discontinuous differential system \eqref{E1}. So,
we must compute the integral \eqref{MRf1}, which for system
\eqref{E1} becomes
\begin{equation}\label{E3}
f(r)= \int_0^{2\pi} F(\T,r)\, d\T,
\end{equation}
where the function $F(\T,r)$ is given in \eqref{E1}.

\smallskip

The solution of the differential system \eqref{E1} in the
half--plane $x= r \cos\T \geq 1$ starting at the point $(r_0, \T_0)$
with $r_0 \cos\T_0=1$ and $\T_0\in (-\pi/2,0)$ is
\[
r(\T)= \exp\left( \dfrac{19 (\T -\T_0)}{50}\right) r_0.
\]
Therefore, at the point $(r_1, \T_1)$ with $r_1 \cos\T_1=1$ and
$\T_1\in (0,\pi/2)$ we have that
\[
\exp\left( \dfrac{19 (\T_1 -\T_0)}{50}\right) r_0 \cos \T_1 = 1.
\]
This equation coincides with the first equation of \eqref{E2}.

\smallskip

Now computing the integral \eqref{E3} we obtain exactly the right
hand side of the second equation of \eqref{E2} multiplied by $r$.
According to Theorem \ref{MRt1} we must find the zeros of this last
expression. Since $r$ cannot be zero the equation for the zeros is
reduced exactly to the second equation of \eqref{E2}. In short, by
Theorem \ref{MRt1} we have proved that a periodic orbit of system
\eqref{E1} intersects the straight line $x=1$ in two points
$(r_0,\T_0)$ and $(r_1,\T_1)$ with $\T_0\in (-\pi/2,0)$, $\T_1\in
(0,\pi/2)$, $r_k \cos \T_k=1$ for $k=0,1$, and $r_0>1$ and $\T_1$
must satisfy the equations \eqref{E2}.

\smallskip

In \cite{LP} it is proved that the discontinuous differential
equation \eqref{E1} has three limit cycles $(i)$ and that the their
points $(r_0,\T_0)$ and $(r_1,\T_1)$ are approximately for the inner
limit cycle of Figure \ref{threechinesecycles}
\begin{equation}\label{E4}
r_0= 1.013330663139..,\quad \T_0=0.162383740477..,  \quad \T_1=
0.5541676264624..;
\end{equation}
for the middle limit cycle of Figure \ref{threechinesecycles}
\begin{equation}\label{E5}
r_0= 1.003945075086..,\quad \T_0=-0.088680876377.., \quad \T_1=
0.768002346543..;
\end{equation}
for the external limit cycle of Figure \ref{threechinesecycles}
\begin{equation}\label{E6}
r_0= 1.111870463116..,\quad \T_0=-0.452434880837.., \quad \T_1=
1.034197922817.. .
\end{equation}
It is easy to check that \eqref{E4}, \eqref{E5} and \eqref{E6}
satisfies the two equations \eqref{E2}. Hence, Theorem \ref{t1} is
proved.

\section*{Appendix A:  Basic results on the Brouwer degree}\label{ApA}

In this appendix we present the existence and uniqueness result from
the degree theory in finite dimensional spaces. We follow the
Browder's paper \cite{B}, where are formalized the properties of the
classical Brouwer degree.

\begin{theorem}\label{ApAt1}
Let $X=\R^n=Y$ for a given positive integer $n$. For bounded open
subsets $V$ of $X$, consider continuous mappings
$f:\overline{V}\rightarrow Y$, and points $y_0$ in $Y$ such that
$y_0$ does not lie in $f(\partial V)$ (as usual $\partial V$ denotes
the boundary of $V$). Then to each such triple $(f,V,y_0)$, there
corresponds an integer $d(f,V,y_0)$ having the following three
properties.
\begin{itemize}
\item[$(i)$] If $d(f,V,y_0)\neq 0$, then $y_0\in f(V)$. If $f_0$
is the identity map of $X$ onto $Y$, then for every bounded open set
$V$ and $y_0\in V$, we have
\[
d\left(f_0\big|_V,V,y_0\right)=\pm 1.
\]

\item[$(ii)$] $($Additivity$)$ If $f:\overline{V}\rightarrow Y$ is
a continuous map with $V$ a bounded open set in $X$, and $V_1$ and
$V_2$ are a pair of disjoint open subsets of $V$ such that
\[
y_0\notin f(\overline{V}\backslash(V_1\cup V_2)),
\]
then,
\[
d\left(f_0,V,y_0\right)=d\left(f_0,V_1,y_0\right)+
d\left(f_0,V_1,y_0\right).
\]

\item[$(iii)$] $($Invariance under homotopy$)$ Let $V$ be a
bounded open set in $X$, and consider a continuous homotopy
$\{f_t:0\leq t\leq 1\}$ of maps of $\overline{V}$ in to $Y$. Let
$\{y_t:0\leq t\leq 1\}$ be a continuous curve in $Y$ such that
$y_t\notin f_t(\partial V)$ for any $t\in[0,1]$. Then $d(f_t,V,y_t)$
is constant in $t$ on $[0,1]$.
\end{itemize}
\end{theorem}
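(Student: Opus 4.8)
The plan is to construct the integer $d(f,V,y_0)$ in the classical way and then, as a bonus beyond what is literally asserted, to obtain its uniqueness by reduction to affine maps. First I would define the degree for the nicest possible maps: if $f$ is the restriction to $\ov V$ of a map of class $\CC^1$, $y_0\notin f(\partial V)$, and $y_0$ is a regular value of $f$ in $V$, then $f^{-1}(y_0)$ is a finite subset $\{x_1,\dots,x_k\}$ of $V$ by the inverse function theorem together with the compactness of $\ov V$, and one sets
\[
d(f,V,y_0)=\sum_{j=1}^{k}\sgn\det Df(x_j),
\]
the empty sum giving $0$. The first genuine task is to show this does not depend on the choice of regular value: by Sard's theorem the regular values in the connected component of $\R^n\setminus f(\partial V)$ containing $y_0$ are dense, and a standard argument — integrating an $n$--form supported near $y_0$, or the transversality/cobordism argument of Milnor — shows that two regular values in the same component give the same integer, and, in the same spirit, that the integer is unchanged under admissible $\CC^1$ homotopies.

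Next I would pass to an arbitrary continuous $f:\ov V\to\R^n$ with $y_0\notin f(\partial V)$. Since $\partial V$ is compact, $\rho:=\inf\{|f(x)-y_0| : x\in\partial V\}>0$; by the Weierstrass theorem pick a $\CC^\infty$ map $g$ with $\sup_{\ov V}|f-g|<\rho/3$ and, by Sard, a regular value $y_0'$ of $g$ with $|y_0-y_0'|<\rho/3$, and declare $d(f,V,y_0):=d(g,V,y_0')$. The previous step makes this independent of the choices, because two such approximations are joined by a linear homotopy that remains admissible. With the construction in place, the three listed properties follow fairly directly: the solution property in $(i)$ holds because $y_0\notin f(V)$ forces $y_0\notin f(\ov V)$, so one may approximate by a $g$ whose image avoids a neighbourhood of $y_0$ and the signed count is $0$; the normalisation $d(f_0|_V,V,y_0)=\pm1$ for $f_0$ the identity is read off the defining sum; additivity $(ii)$ holds because for a good enough approximation every preimage of the regular value lies in $V_1\cup V_2$; and homotopy invariance $(iii)$ is obtained by smoothly approximating the given homotopy, invoking the $\CC^1$ homotopy invariance above, and using a compactness argument along the curve $\{y_t\}$ to keep the perturbation admissible for all $t$ simultaneously.

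For uniqueness I would show that any integer-valued function $D$ on admissible triples satisfying $(i)$--$(iii)$ coincides with the constructed $d$. The reduction runs: translate so that $y_0=0$; use $(iii)$ together with the smooth approximation to homotope $f$, through admissible maps, to a $\CC^1$ map having $0$ as a regular value; localise with $(ii)$ onto small pairwise disjoint balls about the finitely many preimages $x_j$ plus a remainder on which $0$ is not attained; on each ball use $(iii)$ to homotope $f$ to its derivative $Df(x_j)$, a linear isomorphism, then homotope further within $\mathrm{GL}(n)$, whose two connected components are distinguished exactly by the sign of the determinant, reaching either the identity or a fixed reflection; the normalisation in $(i)$ then pins $D$ on that ball to be $\sgn\det Df(x_j)$, and summing gives $D=d$. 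The step I expect to be the main obstacle is the well-definedness of the smooth degree — independence of the chosen regular value and of the smooth approximation — since this is precisely where Sard's theorem and the homotopy-invariance-of-the-signed-count argument genuinely do the work; everything afterward is organisational. In the paper this appendix only records the statement, attributed to Browder \cite{B}, so a reader may equally take it as cited.
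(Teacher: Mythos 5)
Your proposal is sound, but note that the paper does not prove this statement at all: Appendix A only records it and refers to Browder \cite{B} for the proof, so there is no internal argument to compare against. What you outline is the classical analytic construction (degree of a $\CC^1$ map at a regular value as the signed count $\sum_j \sgn\det Df(x_j)$, well-definedness via Sard plus the form-integration or Milnor cobordism argument, extension to continuous $f$ by uniform approximation using $\rho=\mathrm{dist}(y_0,f(\partial V))>0$, then verification of $(i)$--$(iii)$), which is a legitimate and standard route; Browder's paper, by contrast, emphasizes the axiomatic side, deriving the degree's properties and its uniqueness from the normalization, additivity and homotopy axioms. Your sketch correctly identifies where the real work lies (independence of the regular value within a component of $\R^n\setminus f(\partial V)$ and invariance under admissible $\CC^1$ homotopies), but it delegates those steps to cited standard arguments rather than carrying them out, so as written it is an outline at the same level of detail as the paper's citation rather than a self-contained proof. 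Two small caveats: the normalization as stated in the theorem only gives $d(f_0|_V,V,y_0)=\pm 1$, and with that weak form your final uniqueness step (pinning the local contribution to exactly $\sgn\det Df(x_j)$) is not forced, since $-d$ satisfies $(i)$--$(iii)$ verbatim; with the usual normalization $d(\mathrm{id},V,y_0)=+1$ the argument goes through, and in any case uniqueness is the content of the separate Theorem~\ref{ApAt2}, not of the statement you were asked to prove. Also, in the verification of $(iii)$ you should make explicit that $\inf\{|f_t(x)-y_t|: t\in[0,1],\ x\in\partial V\}>0$ by compactness of $[0,1]\times\partial V$, which is exactly what lets a single smooth approximation of the homotopy stay admissible for all $t$.
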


\begin{theorem}\label{ApAt2}
The degree function $d(f,V,y_0)$ is uniquely determined by the three
conditions of Theorem \ref{ApAt1}.
\end{theorem}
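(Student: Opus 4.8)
\emph{Proof proposal for Theorem~\ref{ApAt2} (uniqueness of the Brouwer degree).}

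The plan is to show that any two integer-valued functions $d$ and $d'$ satisfying properties $(i)$--$(iii)$ of Theorem~\ref{ApAt1} take the same value on every admissible triple $(f,V,y_0)$, by reducing — through repeated use of the homotopy invariance $(iii)$ — the computation of $d(f,V,y_0)$ to a completely rigid configuration whose value is forced by $(i)$ and $(ii)$ alone; the same reduction applies verbatim to $d'$. First I would replace $f$ by a smooth map: with $\rho=\mathrm{dist}(y_0,f(\partial V))>0$, pick, by the Weierstrass approximation theorem, a polynomial $g:\ov V\to\R^n$ with $\sup_{\ov V}|g-f|<\rho$, so the segment homotopy $(1-t)f+tg$ avoids the value $y_0$ on $\partial V$ and $(iii)$ gives $d(f,V,y_0)=d(g,V,y_0)$, and similarly for $d'$. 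Then I would move $y_0$ to a regular value of $g$: by Sard's theorem the critical values of the $C^{\infty}$ map $g$ form a null set, so some regular value $y_1$ satisfies $|y_1-y_0|<\mathrm{dist}(y_0,g(\partial V))$; holding $g$ fixed and sliding the base point along the segment from $y_0$ to $y_1$ stays off $g(\partial V)$, so $(iii)$ gives $d(g,V,y_0)=d(g,V,y_1)$. It therefore suffices to determine $d(g,V,y_1)$ for $g$ of class $C^1$ and $y_1$ a regular value.

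Next I would localize and linearize. Since $y_1$ is regular, $g^{-1}(y_1)\cap\ov V$ is a finite set $\{x_1,\dots,x_k\}\subset V$ (if $k=0$, the contrapositive of $(i)$ already forces $d(g,V,y_1)=0$). Taking pairwise disjoint balls $B_i\ni x_i$ with $\ov{B_i}\subset V$ and $g^{-1}(y_1)\cap\ov{B_i}=\{x_i\}$, repeated application of the additivity/excision property $(ii)$ yields $d(g,V,y_1)=\sum_i d(g,B_i,y_1)$. On a small enough ball around $x_i$ the segment homotopy between $g$ and its affine part $x\mapsto y_1+A_i(x-x_i)$ (with $A_i=Dg(x_i)\in GL(n,\R)$) avoids $y_1$ on the bounding sphere, because injectivity of $A_i$ gives a constant $c_i>0$ with $|A_iv|\ge c_i|v|$, which dominates the $o(|v|)$ remainder; hence, using $(iii)$ together with the elementary translation invariance of the degree (itself a one-line consequence of $(iii)$), $d(g,B_i,y_1)=d(A_i,B,0)$ for a ball $B$ about the origin. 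So everything reduces to evaluating $d(A,B,0)$ for invertible linear $A$.

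Because an invertible linear map is nonzero off the origin, any path in $GL(n,\R)$ furnishes an admissible homotopy, so $d(A,B,0)$ depends only on the connected component of $A$. If $\det A>0$, a path from $A$ to the identity gives $d(A,B,0)=d(\mathrm{id}|_B,B,0)=1$ by the normalization in $(i)$; if $\det A<0$, a path from $A$ to the reflection $R=\mathrm{diag}(-1,1,\dots,1)$ gives $d(A,B,0)=d(R,B,0)$, and to evaluate this I would use an auxiliary splitting: the map $q(x)=(x_1^2-1,x_2,\dots,x_n)$ on $B(0,2)$ is homotopic, through maps not vanishing on $\partial B(0,2)$, to $x\mapsto(x_1^2+1,x_2,\dots,x_n)$, which omits the origin, so $d(q,B(0,2),0)=0$; but $q^{-1}(0)=\{(\pm1,0,\dots,0)\}$ with Jacobians $\mathrm{diag}(\pm2,1,\dots,1)$, so $(ii)$ and the orientation-preserving case give $0=d(q,B(0,2),0)=1+d(R,B,0)$, whence $d(R,B,0)=-1$. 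Consequently $d(A,B,0)=\sgn\det A$ for every invertible $A$, so $d(g,V,y_1)=\sum_i\sgn\det Dg(x_i)$ — an integer that does not refer to the chosen degree function. Tracing back through the reductions then gives $d(f,V,y_0)=d'(f,V,y_0)$, which is the asserted uniqueness.

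The hard part will be the evaluation $d(R,B,0)=-1$: the normalization in $(i)$ hands us the sign only for orientation-preserving maps, and extracting the $-1$ attached to a reflection is the one step requiring a non-mechanical idea (the quadratic map $q$, or an equivalent device built from $(ii)$). The remaining ingredients are routine but must be set up with care: Sard's theorem and Weierstrass approximation are the external analytic inputs that make the reduction to a regular value of a smooth map legitimate, and the uniform lower bound controlling the linearization homotopy near a nondegenerate zero has to be checked. One should also note at the outset that the normalization in $(i)$ must be read as $d(\mathrm{id}|_V,V,y_0)=+1$ rather than merely $\pm1$, since otherwise both $d$ and $-d$ would satisfy $(i)$--$(iii)$ and uniqueness could hold only up to an overall sign.
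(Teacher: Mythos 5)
Your proposal is correct, but note that the paper does not actually prove Theorem~\ref{ApAt2}: it refers the reader to Browder \cite{B} for both Theorems~\ref{ApAt1} and~\ref{ApAt2}, so what you have written is a genuine self-contained substitute for that citation, namely the classical uniqueness argument in the Amann--Weiss/F\"uhrer style. Your chain of reductions --- Weierstrass approximation and Sard's theorem, legitimized each time by the homotopy property $(iii)$; localization by additivity; linearization at the nondegenerate zeros; evaluation on the two components of $GL(n,\R)$, with the quadratic map $q(x)=(x_1^2-1,x_2,\dots,x_n)$ forcing the value $-1$ on a reflection --- leads to $d(g,V,y_1)=\sum_i\sgn\det Dg(x_i)$, an expression independent of the chosen degree function, which is exactly what uniqueness requires. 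Two points deserve emphasis when writing it up. First, your closing caveat is essential and well spotted: as printed, property $(i)$ of Theorem~\ref{ApAt1} normalizes the identity only to $\pm1$, and with that reading both $d$ and $-d$ satisfy $(i)$--$(iii)$, so uniqueness as literally stated would fail; Browder's normalization is $+1$ and the statement must be read that way (likewise the additivity formula in $(ii)$ contains a typo, repeating $V_1$ twice). Second, two bookkeeping steps should be made explicit rather than waved at: the paper's additivity is stated for two disjoint subsets, so the decomposition over $k$ balls (and the degenerate cases $k=0,1$) needs a short induction using the contrapositive of $(i)$; and the axioms contain no invariance under translation of the \emph{domain}, so the passage from $d\bigl(x\mapsto A_i(x-x_i),B_i,0\bigr)$ to the degree of a linear map on a ball centered at the origin must be carried out inside the axioms, e.g.\ by excising to a convex domain containing both $0$ and $x_i$ and using the homotopy $f_t(x)=A_i(x-tx_i)$ with fixed base point $0$, or equivalently by moving the base point along $y_t=A_tx_i$ while the matrix travels its path in $GL(n,\R)$. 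With those details filled in, your argument is complete and correct.
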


\smallskip

For the proofs of Theorems \ref{ApAt1} and \ref{ApAt2} see \cite{B}.

\section*{Appendix B: Basic results on averaging theory}\label{ApB}

In this appendix we present the basic result from the averaging
theory that we shall need for proving the main results of this
paper. For a general introduction to averaging theory see for
instance the book of Sanders and Verhulst \cite{SV}.

\begin{theorem}\label{ApBt1}
We consider the following differential system
\begin{equation}\label{ApBs1}
x'(t)=\e\,F(t,x)+\e^2\,R(t,x,\e),
\end{equation}
where $F:\R\times D\rightarrow\R^n$ and $R:\R\times
U\times(-\e_f,\e_f)\rightarrow\R^n$ are continuous functions,
$T$-periodic in the first variable and $D$ is an open subset of
$\R^n$. We define the averaged function $f:D\rightarrow\R^n$ as
\begin{equation}\label{ApBf1}
f(x)=\int_0^T F(s,x)ds,
\end{equation}
and assume that
\begin{itemize}
\item[$(i)$] $F$ and $R$ are locally Lipschitz with respect to $x$;

\item[$(ii)$] for $a\in D$ with $f(a)=0$, there exist a neighborhood
$V$ of $a$ such that $f(z)\neq0$ for all
$z\in\overline{V}\backslash\{a\}$ and $d_B(f,V,0)\neq0$.
\end{itemize}
Then, for $|\e|>0$ sufficiently small, there exist a $T$--periodic
solution $x(t,\e)$ of the system \eqref{ApBs1} such that $x(0,\e)\to
a$ as $\e\to 0$.
\end{theorem}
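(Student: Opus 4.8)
The plan is to reduce the existence of a $T$--periodic solution of \eqref{ApBs1} to the existence of a zero of a suitably normalized displacement map, and then to locate such a zero by invariance of the Brouwer degree. Fix a bounded open set $V$ with $\overline V\subset D$ as in hypothesis $(ii)$ and a compact neighbourhood $K$ of $\overline V$ with $K\subset D$. For $z\in D$ let $x(t,z,\e)$ denote the solution of \eqref{ApBs1} with $x(0,z,\e)=z$. Since $F$ and $R$ are locally Lipschitz in $x$ and continuous jointly, the right--hand side $\e F+\e^2R$ is bounded on $[0,T]\times K\times[-1,1]$; hence, by the local existence and uniqueness theorem together with a continuation argument, there is $\e_1>0$ such that for $|\e|\le\e_1$ every solution starting in $\overline V$ is defined on all of $[0,T]$, remains in $K$, and depends continuously on $(t,z,\e)$. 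Consequently the Poincar\'e map $z\mapsto x(T,z,\e)$ is well defined and continuous on $\overline V$ for $|\e|\le\e_1$, and its fixed points are exactly the initial conditions of $T$--periodic solutions.

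Next I would analyse the displacement. From the integral form of \eqref{ApBs1},
\[
x(T,z,\e)-z=\e\int_0^T F(s,x(s,z,\e))\,ds+\e^2\int_0^T R(s,x(s,z,\e),\e)\,ds.
\]
A Gronwall estimate based on the Lipschitz constant of $F$ on $[0,T]\times K$ gives $|x(s,z,\e)-z|\le C|\e|$ uniformly on $[0,T]\times\overline V$; substituting this and using the Lipschitz bound on $F$ once more yields $\int_0^T F(s,x(s,z,\e))\,ds=f(z)+O(\e)$ uniformly in $z\in\overline V$. Therefore, defining $g(z,\e)=\e^{-1}\big(x(T,z,\e)-z\big)$ for $0<|\e|\le\e_1$ and $g(z,0)=f(z)$, the map $g$ is continuous on $\overline V\times[-\e_1,\e_1]$ with $g(\cdot,0)=f|_{\overline V}$, and a fixed point of the Poincar\'e map in $V$ is precisely a zero of $g(\cdot,\e)$ in $V$. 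This continuity of $g$ up to $\e=0$ is the main obstacle and the only point where one must depart from the classical $\CC^2$ averaging argument: since $F$ is merely Lipschitz, the Poincar\'e map cannot be Taylor--expanded in $\e$, so the ``$O(\e^2)$'' remainder of the smooth theory is replaced by the soft uniform estimate above; everything else is degree theory.

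Now comes the degree argument. Since $d_B(f,V,0)\neq0$ we have $0\notin f(\partial V)$. I claim there is $\e_0\in(0,\e_1]$ so that $g(z,\lambda\e)\neq0$ for all $z\in\partial V$, $\lambda\in[0,1]$, $|\e|\le\e_0$: otherwise there are sequences $\e_m\to0$, $\lambda_m\in[0,1]$, $z_m\in\partial V$ with $g(z_m,\lambda_m\e_m)=0$; by compactness of $\partial V$ pass to a subsequence $z_m\to z^\ast\in\partial V$, and since $\lambda_m\e_m\to0$ continuity of $g$ forces $f(z^\ast)=0$, a contradiction. Fix such an $\e_0$ and any $|\e|\le\e_0$. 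Then $\{g(\cdot,\lambda\e):0\le\lambda\le1\}$ is an admissible homotopy (with the constant curve $y_\lambda\equiv0$) from $f|_{\overline V}$ to $g(\cdot,\e)$, so by property $(iii)$ of Theorem \ref{ApAt1} we get $d_B(g(\cdot,\e),V,0)=d_B(f,V,0)\neq0$, and by property $(i)$ there is $z_\e\in V$ with $g(z_\e,\e)=0$, i.e. $x(T,z_\e,\e)=z_\e$. Hence $x(t,\e):=x(t,z_\e,\e)$ is a $T$--periodic solution of \eqref{ApBs1}.

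Finally, for the convergence $x(0,\e)=z_\e\to a$: let $\e_m\to0$ be arbitrary; the points $z_{\e_m}$ lie in the compact set $\overline V$, so any subsequential limit $z^\ast$ satisfies $f(z^\ast)=g(z^\ast,0)=0$ by continuity of $g$, and since hypothesis $(ii)$ makes $a$ the unique zero of $f$ in $\overline V$, we get $z^\ast=a$. As every subsequence has $a$ as its only cluster point, $z_{\e_m}\to a$; the sequence being arbitrary, $x(0,\e)\to a$ as $\e\to0$, which completes the proof.
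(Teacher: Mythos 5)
Your proof is correct. The paper does not contain its own proof of Theorem \ref{ApBt1} (it refers to \cite{BL}, Section 3), and your argument --- normalizing the displacement $x(T,z,\e)-z$ by $\e$, proving via the uniform Lipschitz/Gronwall estimate that $g(z,\e)=\e^{-1}(x(T,z,\e)-z)$ extends continuously to $\e=0$ with $g(\cdot,0)=f$, and then transferring $d_B(f,V,0)\neq 0$ to $g(\cdot,\e)$ by the homotopy $g(\cdot,\lambda\e)$ --- is essentially the same degree--theoretic proof as in that reference, so there is nothing to add beyond the routine remark that $V$ may be taken bounded with $\overline V\subset D$ by excision.
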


Theorem \ref{ApBt1} for studying the periodic orbits of continuous
differential systems has weaker hypotheses than the classical result
for studying the periodic orbits of smooth differential systems, see
for instance Theorem 11.5 of Verhulst \cite{V}, where instead of $(i)$
is assumed that
\begin{itemize}
\item[$(j)$] $F,\,R,\,D_xF,\,D_x^2F$ and $D_xR$ are defined, continuous
and bounded by a constant $M$ (independent of $\e$) in
$[0,\infty)\times D$, $-\e_f<\e<\e_f$;
\end{itemize}
and instead of $(ii)$ it is required that
\begin{itemize}
\item[$(jj)$] for $a\in D$ with $f(a)=0$ we have that $J_{f}(a)\neq 0$,
where $J_{f}(a)$ is the Jacobian matrix of the function $f$ at the
point $a$.
\end{itemize}

\smallskip

For a proof of Theorem \ref{ApBt1} see \cite{BL} section 3.

\section*{Acknowledgements}

The first author is partially supported by a MICINN/FEDER grant
MTM2008--03437, by a AGAUR grant number 2009SGR--0410, by ICREA
Academia and FP7 PEOPLE-2012-IRSES-316338 and 318999. The second
author is partially supported by a FAPESP--BRAZIL grant 2012/10231--7. The
third author is partially supported by a FAPESP--BRAZIL grant
2007/06896--5. The first and third authors are also supported by the
joint project CAPES-MECD grant PHB-2009-0025-PC.

\end{document}